\crefname{hypothesis}{Hypothesis}{Hypotheses}
\title{Deep Tangent Bundle (DTB) Method: a deep neural network approach to compute solutions of PDEs 
}
\author{Hao Wu\thanks{Charlotte, NC, USA
 (\email{hwu406@gmail.com}).}
  \and
 Haomin Zhou\thanks{School of Mathematics, Georgia Institute of Technology, Atlanta, GA, USA(\email{hmzhou@gatech.edu}).}
 }
\newcommand*{\addFileDependency}[1]{
  \typeout{(#1)}
  \@addtofilelist{#1}
  \IfFileExists{#1}{}{\typeout{No file #1.}}
}
\newcommand*{\myexternaldocument}[1]{%
    \externaldocument{#1}%
    \addFileDependency{#1.tex}%
    \addFileDependency{#1.aux}%
}
\begin{document}

\maketitle

\begin{abstract}
We develop a numerical framework, the Deep Tangent Bundle (DTB) method, that is suitable for computing solutions of evolutionary partial differential equations (PDEs) in high dimensions. The main idea is to use the tangent bundle of an adaptively updated deep neural network (DNN) to approximate the vector field in the spatial variables while applying the traditional schemes for time discretization. The DTB method takes advantage of the expression power of DNNs and the simplicity of the tangent bundle approximation. It does not involve nonconvex optimization. Several numerical examples demonstrate that the DTB is simple, flexible, and efficient for various PDEs of higher dimensions.  

\end{abstract}

\begin{keywords}
  Deep learning; evolution PDEs; 
\end{keywords}

\section{Introduction}

In this paper, we focus on numerical simulations of the initial value problems of evolutionary partial differential equations (PDEs) in high dimensions given as
\begin{subequations}
\label{eq: evolutional pde}
\begin{align}
    &\partial_t u(t, z) = F[u](t, z),\\
    &u(0, z) = \phi(z),
\end{align}
\end{subequations}
where $F[\cdot]$ represents the (possibly nonlinear) differential operator, and $\phi(z)$ is the initial condition.

In recent years, numerous deep neural network (DNN)-based approaches have been proposed to solve PDEs \cite{han2017deep, ruthotto2020machine,gaby2023neural,zhang2023transnettransferableneuralnetworks,liu2024naturalprimaldualhybridgradient, HARDWICK2025114273, Lu_2021, li2021fourierneuraloperatorparametric}. As these methods are often sample-based and mesh-free, they are particularly suitable for high-dimensional settings. A prominent class of methods parameterizes the solution globally in time using a single network \( \tilde{f}_{\theta}(t, z) \) with input variables $(t, z)$ and $\theta$ are the DNN parameters. Examples like Physics-Informed Neural Networks (PINNs) \cite{raissi2019physics}, Deep Ritz Method (DRM) \cite{yu2018deep}, and Weak Adversarial Networks (WANs) \cite{zang2020weak, bao2020numerical} formulate the PDE as an optimization problem. They employ gradient-based training to minimize the residual \( \partial_t \tilde{f}_{\theta} - F[\tilde{f}_{\theta}] \) in the sense of a strong or weak solution, while enforcing the boundary conditions using penalty terms. It has been demonstrated in many studies that these methods are highly flexible with remarkable potential in addressing some difficult simulations. However, the nonlinear structure of DNNs, which makes the optimization non-convex, can introduce challenges such as training instability and convergence difficulties.

An alternative, designed specifically for evolutionary PDEs, is the so-called local-in-time (LIT) framework such as Neural Galerkin method \cite{BRUNA2024112588, berman2023randomizedsparseneuralgalerkin}, Time-Evolving Natural Gradient (TENG) method \cite{chen2024teng} and the neural parametric method \cite{doi:10.1137/20M1344986}. Instead of using global-in-time parameterization, the LIT methods first approximate the initial condition with a DNN $f_{\theta}(z)$, dependent only on the spatial variable. The evolution is then captured by solving for the trajectory of the parameters $\theta(t)$, which is typically formulated as a finite-dimensional ordinary differential equation (ODE) system. This allows the use of classical numerical schemes to solve the resulting ODEs, often in a training-free manner, so that the efficiency and accuracy are enhanced.

As a particular form of the LIT methods, it was shown in Parameterized Wasserstein Hamiltonian Flow (PWHF) \cite{doi:10.1137/23M159281X} that the gradients \( \partial_{\theta} f_{\theta} \) spans a \( L^2 \) subspace \( \mathcal{T}_{\theta} \), and that the evolution of $\theta (t)$ is closely related to the metric tensor defined in the tangent space $\mathcal{T}_{\theta}$. However, convergence analysis and numerical experiments in PWHF also reveal a possible degeneration issue associated with the metric. Specifically, as \( \theta (t)\)  evolves, the metric tensor may become ill-conditioned or singular, resulting in two severe consequences. First, it leads to a larger approximation error, as the expressive power of the subspace \( \mathcal{T}_{\theta} \) is compromised. Second, it induces stiffness in the ODE system for $\theta(t)$, necessitating prohibitively small time steps to avoid instability in the function approximation due to the high nonlinearity of the DNN. Since the trajectory of $\theta(t)$ is driven by the PDE itself, this degeneracy is an inherent challenge that limits the potential application of such methods.

Motivated by these observations, we propose to use the tangent bundle of the DNNs to directly represent the vector field $F[u]$ in the PDEs in this work.  In particular, we develop a scheme that projects $F[u]$ onto the tangent space of $f_{\theta}$ and then advances the solution in time with numerical integrators. Meanwhile, the parameters $\theta$ are adapted dynamically when necessary. Our approach retains the properties of being mesh‐free, having high‐dimensional capability of DNN methods while mitigating ill-condition and stability issues that arise in the existing LIT schemes.

This work shares some similarities with the Random Feature Method (RFM) \cite{Chen2024} and the Extreme Learning Machine (ELM) method \cite{DONG2021114129, DONG2022111290}, as all of them use linear combinations of feature functions to approximate PDE solutions. The RFM, for example, constructs features using randomly sampled parameters, which enhances its computation efficiency and scalability. Unlike the RFM and ELM that rely on fixed randomly generated features, the proposed method constructs a problem-informed adaptive tangent bundle that aims to capture the PDE structure more directly. This may help find features with comparable or fewer parameters, but stronger expressiveness to the solutions, which may lead to more accurate and stable approximations, especially when dealing with complicated high-dimensional nonlinear PDEs.

The paper is organized as follows. Sections \ref{sec: func approx intro} and \ref{sec: dtb pde solve intro} introduce the DTB methodology and detail the algorithm design. Section \ref{sec: error estimate} provides a systematic analysis of its error sources. Section \ref{sec: numerical method} presents numerical results that demonstrate the performance of the method in various tasks.

\section{The DTB method}\label{sec: alg design and error analysis}
To address these issues for evolutionary PDEs with nonlinearity in high dimensions, we propose the Deep Tangent Bundle (DTB) method, whose design is guided by two core principles.

\begin{enumerate}
    \item The tangent bundle of a DNN is used to spatially approximate $F[u]$, while traditional temporal schemes can be used to compute the solution.
    \item The DNN parameters used to generate the tangent bundle are adaptively updated to improve accuracy and efficiency.
\end{enumerate}

A crucial insight underlying our approach, inspired by the error analysis of the parameterized Wasserstein Hamiltonian flow (PWHF) shown in \cite{doi:10.1137/23M159281X}, is that the orthogonal projection of the right-hand side of the PDE onto the DNN tangent space provides an upper bound quantifying the accuracy of the approximation. In other words, the evolutionary PDE solution can be accurately computed if its temporal rate of change can be efficiently approximated in the DNN tangent space. 

In this section, we introduce the DTB method step by step. Section \ref{sec: func approx intro} discusses how to use DTB to approximate functions, which is the backbone of the first principle.  Section \ref{sec: dtb pde solve intro} explains how to construct the PDE solution using DTB, with a guideline for adaptively updating the DNN parameters. We provide an error estimate for the DTB method and highlight its key advantages in Section \ref{sec: error estimate}.

\subsection{The DTB function approximation}\label{sec: func approx intro}
We consider how to approximate a real valued function \( g: \mathbf{R}^d \to \mathbf{R}^q \). We denote the DNN as \(f_\theta\) with \(\theta\in \mathbf{R}^m\), typically represented as compositions of activation functions and linear transformations. For example, a multilayer perceptron (MLP) with $L$ layers can be defined as:
\begin{align}
    f_{\theta}(z) &= W_{L} \sigma \left( W_{L-1} \sigma \left( \cdots \sigma \left( W_1 z + b_1 \right) \right) + b_{L-1} \right)+b_L,
\end{align}
where $\theta=\{(W_i, b_i)\}_{i=1}^L$, $W_i$ and $b_i$ are matrices or vectors with proper dimensions, $\sigma$ is a non-linear activation function applied element-wise. To simplify the presentation, we use lower subscripts to denote elements of vectors (e.g., \(\theta_i\) is the \(i\)-th element of the vector \(\theta\)), while superscripts indicate different vectors or parameter states (e.g., \(\theta^k\) represents the parameters at step \(k\)).

Unlike the existing DNN based function approximation, which aims to find the optimal parameters $\theta$ to minimize the distance between $g$ and $f_{\theta}$, the \textbf{DNN tangent bundle}, abbreviated DTB and denoted by $\mathcal{B}(f, \theta)=\text{span}\{\frac{\partial}{\partial\theta_i}f_{\theta}:1\leq i\leq m\}$, offers an alternative approach to function approximation by defining
\begin{align}
    \label{eq: DTB notation}
    TB(f_{\theta}; v) = \frac{\partial}{\partial\theta}f_{\theta}(z) \cdot v,
\end{align}
where $v \in \mathbf{R}^m$ are the DTB coefficients representing the linear combination coefficients. The best approximation is obtained when $v$ is taken as the optimal coefficients $\alpha$, namely $\alpha$ is the minimizer of the following least-squares problem, 
\begin{equation}
\label{eq: alpha def}
    \alpha = \arg \min_{v\in \mathbf{R}^m} \int \left\| \frac{\partial}{\partial \theta}f_{\theta}(z)\cdot v - g(z) \right\|^2 \, dz.
\end{equation}
This is a semi-convex optimization problem whose solution is given by 
\begin{align}
\label{eq: G alpha sol}
    \alpha = G(\theta)^{\dagger}P(\theta; g),
\end{align}
where $G(\theta)\in \mathbf{R}^{m\times m}$ is the metric tensor given in \cite{doi:10.1137/23M159281X}
\begin{align}
    G(\theta) = \int \frac{\partial}{\partial\theta}f_{\theta}(z)^\top \frac{\partial}{\partial\theta}f_{\theta}(z)dz,
\end{align}
and $P(\theta; \cdot): \mathcal{L}^2(\mathbf{R}^d; \mathbf{R}^q)\rightarrow \mathbf{R}^m$ is the projection 
\begin{align}
    P(\theta; g)=\int  \frac{\partial}{\partial\theta}f_{\theta}(z)^\top g(z) dz.
\end{align}

Introducing a projection operator in $L^2$ space 
\begin{align}\label{eq: func proj op def}
    \mathcal{K}_\theta[\cdot](z) = \frac{\partial}{\partial\theta}f_{\theta}(z)G(\theta)^{\dagger}P(\theta; \cdot), 
\end{align}
we can rewrite the optimal DTB approximation of $g$ as 
\begin{align}
    \label{eq: func opt approx}
    \mathcal{K}_\theta[g](z) = \frac{\partial}{\partial\theta}f_{\theta}(z)\left(\int \frac{\partial}{\partial\theta}f_{\theta}(z)^\top \frac{\partial}{\partial\theta}f_{\theta}(z)dz\right)^{\dagger} \int  \frac{\partial}{\partial\theta}f_{\theta}(z)\cdot g(z) dz = TB(f_{\theta}; \alpha).  
\end{align}

A key distinction of the DTB approximation is that the variable in the optimization is the DTB coefficients $v$, not the DNN parameters $\theta$. Its close-form solution can be derived analytically. Meanwhile, its approximation power is supported by the existing DNN universal approximation theorem through the following proposition.

\begin{proposition}
Let \(f_\theta:\mathbf{R}^d \to \mathbf{R}^q\) be a deep neural network 
whose last layer is linear without bias.  Then  
\begin{align}
  f_\theta\;\in\;\mathrm{span}\bigl\{\partial_{\theta}f_\theta\bigr\}=\mathcal{B}\bigl(f;\theta\bigr),
  \quad \text{for arbitrary } \theta.    
\end{align}
\label{prop: approximation}
\end{proposition}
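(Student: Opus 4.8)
The plan is to exploit the single structural hypothesis—that the last layer is linear and bias-free—via Euler's identity for homogeneous functions. I would write $\theta = (\eta, W_L)$, where $W_L \in \mathbf{R}^{q\times n}$ is the last-layer weight matrix and $\eta$ collects all remaining parameters, and let $h_\eta(z)\in\mathbf{R}^n$ denote the output of the penultimate layer, so that $f_\theta(z) = W_L\, h_\eta(z)$. The key observation is that $f_\theta$ is linear—hence homogeneous of degree one—in the entries of $W_L$, since $h_\eta$ does not depend on $W_L$; this is precisely where the ``linear, no bias'' assumption enters.

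Next I would compute the relevant partial derivatives explicitly. For the entry $(W_L)_{ij}$ one has $\partial f_\theta(z)/\partial (W_L)_{ij} = e_i\, h_{\eta,j}(z)$, where $e_i$ is the $i$-th standard basis vector of $\mathbf{R}^q$ and $h_{\eta,j}(z)$ the $j$-th component of $h_\eta(z)$. Summing these derivatives against the weights themselves yields Euler's identity:
\begin{align}
  \sum_{i=1}^{q}\sum_{j=1}^{n} (W_L)_{ij}\,\frac{\partial f_\theta(z)}{\partial (W_L)_{ij}}
  \;=\; \sum_{i,j} (W_L)_{ij}\, e_i\, h_{\eta,j}(z)
  \;=\; W_L\, h_\eta(z) \;=\; f_\theta(z).
\end{align}

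To conclude, I would assemble a coefficient vector $v \in \mathbf{R}^m$ whose components indexed by the last-layer weights are the corresponding entries of $W_L$ and whose remaining components vanish. Then by the display above, $TB(f_\theta; v) = \frac{\partial}{\partial\theta}f_\theta(z)\cdot v = f_\theta(z)$ pointwise in $z$, which is exactly the assertion $f_\theta \in \mathrm{span}\{\partial_\theta f_\theta\} = \mathcal{B}(f;\theta)$; since $W_L$ (hence $\theta$) was arbitrary, this holds for every $\theta$.

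I do not anticipate a genuine obstacle: the statement is essentially a coordinate computation, and the only points needing care are the index bookkeeping for the vector-valued output $f_\theta\in\mathbf{R}^q$ and the remark that the span is taken with constant ($z$-independent) coefficients—which is satisfied because $v$ is built from $W_L$, fixed once $\theta$ is fixed. (If one instead allowed an affine last layer $f_\theta = W_L h_\eta + b_L$, the same argument goes through after adding the term $\sum_k (b_L)_k\, \partial_{(b_L)_k} f_\theta$, since $\partial_{(b_L)_k} f_\theta = e_k$; the bias-free hypothesis merely makes the homogeneity exact and keeps the identity to a single block.)
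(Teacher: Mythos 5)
Your proposal is correct and is essentially the same argument as the paper's: write $f_\theta = W_L h_\eta$ (the paper writes $f_\theta=\tilde f_{\tilde\theta}\cdot w$), note the derivative with respect to the last-layer weights reproduces the penultimate-layer features, and choose the coefficient vector equal to the last-layer weights on that block and zero elsewhere. The Euler-homogeneity framing is just a restatement of this same computation, so there is no substantive difference.
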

\begin{proof}
    Since the last layer is linear, there exists a DNN $\tilde{f}_{\tilde{\theta}}$ and vector $w$ such that $f_{\theta}(z)=\tilde{f}_{\tilde{\theta}}(z)\cdot w$ with $\theta = (\tilde{\theta}, w)\in \mathbf{R}^m, \tilde{\theta}\in \mathbf{R}^{m-m_1}, w\in \mathbf{R}^{m_1}, \tilde{f}_{\tilde{\theta}}(z)\in \mathbf{R}^{q\times m_1} $. Notice that $\frac{\partial}{\partial\theta}f_{\theta} = (\frac{\partial}{\partial\tilde{\theta}}\tilde{f}_{\tilde{\theta}}\cdot w, \tilde{f}_{\tilde{\theta}})$. Take DTB coefficient $v=(\textbf{0}, w)\in \mathbf{R}^{m}$, and we can check:
    \begin{align*}
        f_{\theta}(z)=\tilde{f}_{\tilde{\theta}}(z) \cdot w=  (\frac{\partial}{\partial\tilde{\theta}}\tilde{f}_{\tilde{\theta}}\cdot w, \tilde{f}_{\tilde{\theta}})\cdot (\textbf{0}, w)=\frac{\partial}{\partial\theta}f_{\theta}\cdot v\in\;\mathrm{span}\bigl\{\partial_{\theta}f_\theta\bigr\}.
    \end{align*}
\end{proof}



Although Proposition \ref{prop: approximation} gives an approach to transfer the results from the DNN approximation to the DTB, it provides little practical guidance. On the other hand, the DTB of a DNN, even if it is not optimally constructed, may act as an overly redundant basis and can still achieve reasonable approximation results. We illustrate this observation using several numerical experiments in Section \ref{sec: numerical method}.

The closed-form solution \eqref{eq: G alpha sol} leads to an algorithm which is given in Algorithm \ref{alg: G-DTB}. It computes the approximation using the $G$ metric tensor. We call it the $G$-form DTB method. 

\begin{algorithm}[H]
\caption{$G$-form DTB for function approximation}
\label{alg: G-DTB}
\begin{algorithmic}[1]
\STATE{\textbf{Input}: Given a target function $g$.}
\STATE{Generate samples $\{z_i\}_{i=1}^{n}$ for evaluation;}
\STATE{Choose a (accumulated/stacked) DTB base $f_{\theta}$;}
\STATE{Formulate the empirical DTB metric operator $G(\theta)$ as well as projection vector $P(\theta; g)$ through samples;}
\STATE{Apply linear system solver to solve $\alpha$ from $\alpha = G(\theta)^{\dagger}P(\theta; g)$;}

\STATE{\textbf{Output}: Solution pair $(\theta, \alpha)$ and the DTB approximation function $TB(f_{\theta}(\cdot); \alpha) = \frac{\partial}{\partial\theta}f_{\theta}(\cdot) \cdot \alpha$. }
\end{algorithmic}
\end{algorithm}

Since \eqref{eq: alpha def} is a least-squares problem, many existing algorithms can be adopted. In fact, the $G$-form DTB algorithm is based on the normal equation strategy. An alternative is to solve it by singular value decomposition (SVD), i.e. using the SVD to solve the linear system defined by the Jacobian matrix,
\begin{align}
\label{eq: J alpha solution}
    \frac{\partial}{\partial\theta}f_{\theta}(z) \cdot \alpha = g(z),
\end{align}
we call this formulation the $J$-form DTB and provide its details in Algorithm \ref{alg: J-DTB}. 

\begin{algorithm}[H]
\caption{$J$-form DTB for function approximation}
\label{alg: J-DTB}
\begin{algorithmic}[1]
\STATE{\textbf{Input}: Given a target function $g$.}
\STATE{Generate samples $\{z^i\}_{i=1}^{n}$ for evaluation;}
\STATE{Choose a (accumulated/stacked) DTB base $f_{\theta}$;}
\STATE{Compute the Jacobian $J(\theta;z^i)=\frac{\partial}{\partial\theta}f_{\theta}(z^i)$ for each sample $z^i$; formulate the Jacobian matrix $\textrm{Jac}(\theta)=[J(\theta;z^1)^\top, \dots, J(\theta;z^n)^\top]^\top$ and the target vector $\vec{g}=[g(z^1)^\top, \dots, g(z^n)^\top]^\top$}
\STATE{Apply SVD-based linear system solver to solve $\alpha$ from $\alpha = \textrm{Jac}(\theta)^{\dagger}\vec{g}$;}

\STATE{\textbf{Output}: Solution pair $(\theta, \alpha)$ and the DTB approximation function $TB(f_{\theta}(\cdot); \alpha) = \frac{\partial}{\partial\theta}f_{\theta}(\cdot) \cdot \alpha$. }
\end{algorithmic}
\end{algorithm}

In an ideal scenario where an exact solution can be found, both the $G$-form and the $J$-form yield the same result. However, in practice, they can differ in terms of effectiveness, accuracy, and stability. Based on numerical analysis and experimental findings, the $G$-form leads to a linear system with a smaller dimension when the number of samples exceeds the number of DTB basis functions (the number of parameters in DNN). An effective implementation of the $G$ metric tensor as a linear operator using GPUs has been developed in \cite{doi:10.1137/23M159281X}, making the $G$-form potentially more computationally efficient.

On the other hand, the $J$-form has a smaller condition number, and experimental results indicate that the $J$-form provides better approximation accuracy, particularly when random samples are used to construct the linear system.
With the advancement of machine learning libraries, the Jacobian matrix 
$\textrm{Jac}(\theta)$ can now be computed efficiently. Consequently, the second choice, the $J$-form DTB method, has become a practical option.

Although choosing between the $G$-form and the $J$-form methods depends on the specific problem setting and computational constraints, both formulations offer complementary strengths. 

Since DTB is often redundant, various techniques can be used to enhance performance when solving \eqref{eq: G alpha sol}. For example, a random sparse subspace method was proposed in \cite{berman2023randomizedsparseneuralgalerkin}, which has demonstrated significantly improved efficiency and stability. Inspired by this study, a randomly selected subspace of $\mathcal{B}(f, \theta)$ can be used for the approximation in our context. The selection of a random subset \(\mathcal{B}_0\) is typically performed using a partial indexing approach. We randomly select \(l\) indices from the set \(\{1, 2, \ldots, m\}\), forming a subset \(s\). Then, we define \(\mathcal{B}_0\) as the span of derivatives \(\{\partial_{\theta_i} f_{\theta} : i \in s\}\). The corresponding metric tensor \(G(\theta)\) is given by the Gram matrix of these basis functions with respect to the \(L^2\) inner product,

\begin{equation}
\label{eq: sparse metric}
G_{ij}(\theta) = 
\begin{cases}
\displaystyle \int \partial_{\theta_i} f_\theta(z)^\top \, \partial_{\theta_j} f_\theta(z) \, dz, & \text{if } i,j \in s, \\[8pt]
0, & \text{if } i,j \notin s,
\end{cases}
\end{equation}
which is then used in finding the approximation of \eqref{eq: G alpha sol}.

\begin{remark}
The choice of DNN in the formulation \eqref{eq: alpha def} can be flexible. In principle, any DNN with $L^2$-integrable gradient with respect to $\theta $ can be selected, although different choices can lead to different results. An added feature for the DTB, which sets it apart from several other LIT methods, is its convenience in combining multiple DNNs to enhance the approximation power. 
In this case, the tangent bundle becomes the direct sum of the tangent bundles of these individual DNNs. We highlight that the simplicity and flexibility of merging multiple DNNs into a unified tangent bundle allows for greater expressiveness in approximating complex functions, and this becomes particularly important when adaptive strategies are needed in practice.  
\end{remark}

\subsection{The DTB method for solving evolution PDEs}\label{sec: dtb pde solve intro}
In this part, we first present the basic ideas in designing DTB solvers for evolutionary PDEs. Then we discuss the construction of temporal high-order DTB solvers. In \ref{sec: WF intro}, we apply the DTB solvers to the Wasserstein gradient and Hamiltonian flows.

\subsubsection{The basic ideas of DTB solvers}
Following the two principles given at the beginning of Section \ref{sec: alg design and error analysis}, we describe how the solution of \eqref{eq: evolutional pde} and the parameters $\theta$ used to generate the tangent bundle are computed.

{\bf Compute the solution ($u^{k} \;\mapsto\; u^{k+1}$):}
The proposed DTB method can be used with any existing time discretization scheme. For simplicity, we take the forward Euler scheme to illustrate the main ideas. Divide the time interval $[0, T]$ into $K$ subintervals with equal stepsize $h=\frac{T}{K}$. Let $u^k(z)=\widehat{u}(\frac{kT}{l}, z)$ represent the approximate solution at time $\frac{kT}{K}$ and $f_{\theta^k}$ a candidate DTB basis. The forward Euler scheme is
\begin{align}
\label{eq: forward euler u level}
    u^{k+1}(z) &= u^k(z) + h F[u^k(z)].
\end{align}
The term $F[u^k(z)]$ is approximated by its optimal DTB approximation $TB(f_{\theta^k}; \alpha^k)$, leading to 
\begin{align}
\label{eq: func update}
    u^{k+1}(z) = u^k(z) + h \cdot TB(f_{\theta^k}; \alpha^k),
\end{align}
where the optimal DTB coefficient $\alpha^k \in \mathbf{R}^m$ is determined by 

\begin{align}
    \alpha^k = G(\theta^k)^{\dagger}P(\theta^k; F[u^k]).
\end{align}
Alternatively, equation \eqref{eq: func update} can be rewritten in terms of the orthogonal projection operator $\mathcal{K}_{\theta^k}$ as
\begin{align}\label{eq: func update kernel form}
    u^{k+1}(z) = u^k + h\mathcal{K}_{\theta^k}[F[u^k]].
\end{align}

{\bf Adapt the parameters ($\theta^{k} \;\mapsto\; \theta^{k+1}$):} It is not necessary to update $\theta^{k}$ according to the same rule as that being used to compute $u^k$. From our experiments, we suggest three options. 

\begin{itemize}
    \item No update for \(\theta\), which works for linear PDEs; see Section \ref{sec: midpoint euler scheme for heat} and Section \ref{sec: heat eq}.
    \item Forward update rule via \(\theta^{k+1} = \theta^{k} + h \alpha^{k}\). This update rule is effective for nonlinear PDEs when the computed \(\alpha^{k}\) is small.
    \item Periodic reset rule, as presented in Proposition \ref{prop: theta update opt}. This approach is recommended when the PDE is nonlinear and the \(\alpha\) dynamics are stiff.
\end{itemize}

\begin{proposition}\label{prop: theta update opt}
    Every \(L\) steps, update \(\theta\) by solving the following optimal approximation, i.e., after solving $u^{jL}$ at $jL$-th step, reset $\theta^{jL}$ as
    \[
      \theta^{jL} \;=\;
      \arg\min_{\tilde\theta}
      \big\|\,u^{jL} - f_{\tilde\theta}\big\|_{L^2},
    \]
    then continue from this new basis parameters.
\end{proposition}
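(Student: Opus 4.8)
I read Proposition~\ref{prop: theta update opt} as asserting that the periodic reset is the \emph{optimal} re-parameterization at step $jL$: among all admissible new basis parameters, taking $\theta^{jL}$ to be the $L^2$-best fit of the current solution $u^{jL}$ minimizes, to leading order in the step size $h$, the error that the DTB projection introduces over the next block of $L$ steps. The plan is to combine two ingredients: the tangent-bundle containment of Proposition~\ref{prop: approximation}, and the per-step consistency estimate set up in Section~\ref{sec: error estimate}.

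First I would recall from Section~\ref{sec: error estimate} that one step $u^{k}\mapsto u^{k+1}$ of the forward-Euler DTB scheme \eqref{eq: func update kernel form} contributes a consistency error controlled by $h\,\big\|(\mathrm{Id}-\mathcal{K}_{\theta^{k}})F[u^{k}]\big\|_{L^2}$, i.e. by the $L^2$ distance from $F[u^{k}]$ to the tangent space $\mathcal{T}_{\theta^{k}}=\mathcal{B}(f;\theta^{k})$. Between two consecutive resets the solution moves by $O(h)$ per step, so $F[u^{k}]=F[u^{jL}]+O(h)$ for $jL\le k<(j+1)L$ under a Lipschitz/boundedness assumption on $F$; hence, to leading order, the contribution of the whole block is governed by the single quantity $\operatorname{dist}_{L^2}\!\big(F[u^{jL}],\,\mathcal{B}(f;\theta^{jL})\big)$, and the question reduces to: which $\theta$ aligns $\mathcal{B}(f;\theta)$ best with $F[u^{jL}]$?

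Next I would use the local-in-time/PWHF viewpoint, sharpened by Proposition~\ref{prop: approximation}. By that proposition $f_{\tilde\theta}\in\mathcal{B}(f;\tilde\theta)$, so $\operatorname{dist}_{L^2}(u^{jL},\mathcal{B}(f;\tilde\theta))\le\|u^{jL}-f_{\tilde\theta}\|_{L^2}$; moreover, for an evolutionary PDE the admissible infinitesimal motions of the solution along its trajectory are exactly the tangent vectors $\partial_\theta f_{\tilde\theta}\cdot v$ (this is the content of the Taylor expansion $f_{\tilde\theta+hv}=f_{\tilde\theta}+h\,\partial_\theta f_{\tilde\theta}\cdot v+O(h^2)$), so $F[u^{jL}]=\partial_t u$ evaluated at $u^{jL}$ lies in the tangent space up to the error with which $u^{jL}$ itself is represented. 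Quantitatively — with a uniform bound on $\partial^2_\theta f_\theta$ and Lipschitz dependence of $F$, recorded as a standing assumption — this gives $\operatorname{dist}_{L^2}\!\big(F[u^{jL}],\mathcal{B}(f;\tilde\theta)\big)\le C\,\|u^{jL}-f_{\tilde\theta}\|_{L^2}$ with $C$ independent of $\tilde\theta$ on a bounded region of parameter space. Minimizing the right-hand side over $\tilde\theta$ is exactly the least-squares problem in the statement, so its minimizer $\theta^{jL}$ minimizes the leading-order error surrogate, which is the claimed optimality; and in the limiting case of an exact fit, $u^{jL}=f_{\theta^{jL}}$, Proposition~\ref{prop: approximation} gives $u^{jL}\in\mathcal{B}(f;\theta^{jL})$ exactly, so the new block starts from a parameterization that reproduces the current state with zero representation error and restores the regime where the cheap forward update $\theta^{k+1}=\theta^{k}+h\alpha^{k}$ is accurate.

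The main obstacle is making "optimal" rigorous rather than heuristic. The minimization $\arg\min_{\tilde\theta}\|u^{jL}-f_{\tilde\theta}\|_{L^2}$ runs over the non-convex DNN parameter manifold, so in practice only a local minimizer is obtained; and the step from "$u^{jL}$ well approximated" to "$F[u^{jL}]$ well approximated \emph{inside} $\mathcal{B}(f;\theta^{jL})$" is not automatic for a general operator $F$ — it must be extracted from regularity of $F$ together with the above Taylor expansion, which is where the standing assumptions enter. I would therefore state the proposition under those hypotheses and phrase its conclusion as optimality for the leading-order error surrogate, keeping the exact-fit case as the clean qualitative statement.
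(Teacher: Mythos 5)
There is nothing to compare your argument against: the paper does not prove Proposition~\ref{prop: theta update opt} at all. Despite the label ``Proposition,'' it is a prescription --- the periodic-reset update rule --- and the paper immediately says that the insight behind it (``if \(f_\theta\) can sufficiently approximate \(u^{jL}\), then its tangent bundle can likewise approximate the related vector field'') is \emph{observed empirically}, not established. By recasting the statement as a leading-order optimality theorem you are proving something stronger than what the paper claims, and the burden of making that rigorous falls entirely on your added hypotheses.

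That is where the genuine gap sits. Your central estimate, \(\operatorname{dist}_{L^2}\bigl(F[u^{jL}],\mathcal{B}(f;\tilde\theta)\bigr)\le C\,\|u^{jL}-f_{\tilde\theta}\|_{L^2}\) with \(C\) uniform in \(\tilde\theta\), does not follow from a bound on \(\partial^2_\theta f_\theta\) together with ``Lipschitz dependence of \(F\).'' For the operators actually treated in the paper, \(F\) contains \(\Delta\) (heat, Allen--Cahn), and a differential operator is not Lipschitz from \(L^2\) to \(L^2\): smallness of \(u^{jL}-f_{\tilde\theta}\) in \(L^2\) controls neither \(\Delta(u^{jL}-f_{\tilde\theta})\) nor, more to the point, whether \(F[f_{\tilde\theta}]\) lies anywhere near \(\mathrm{span}\{\partial_\theta f_{\tilde\theta}\}\). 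Proposition~\ref{prop: approximation} only guarantees \(f_\theta\in\mathcal{B}(f;\theta)\); it says nothing about \(F[f_\theta]\) or \(F[u]\) belonging to that span. Your step identifying \(\partial_t u=F[u]\) with an ``admissible infinitesimal motion'' \(\partial_\theta f_{\tilde\theta}\cdot v\) conflates directions in which the parameterized family \emph{can} move with the direction in which the PDE solution \emph{does} move; the residual \((\mathrm{Id}-\mathcal{K}_{\theta})F[u]\) is precisely the quantity the paper cannot bound a priori and therefore carries as the assumed/measured term \(\delta^k\) in its error theorem in Section~\ref{sec: error estimate}. Without an additional structural assumption (e.g.\ an approximation estimate for \(F[u]\) itself by the tangent bundle, in a norm strong enough to dominate the differential operator), the chain from ``\(u^{jL}\) well fitted'' to ``\(F[u^{jL}]\) well projected'' does not close, and the claimed optimality of the reset rule remains, as in the paper, a heuristic.
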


The key insight behind this update rule, observed empirically, is that if \(f_{\theta}\) can sufficiently approximate the target function \(u^{jL}\), then its tangent bundle can likewise approximate the related vector field effectively.

The above procedure demonstrates how the DTB framework can be applied to approximate the solution of evolutionary PDEs by iteratively solving the coefficients $\alpha^k$ at each time step; and adapt the parameter $\theta$ if necessary.
The algorithm is formulated as follows.
\begin{algorithm}[H]
\caption{Forward Euler DTB method for evolutional PDE}
\label{alg: forward euler DTB}
\begin{algorithmic}[1]
\STATE{Set terminal time $t_1$ and number of steps $K$. Set step size $h=t_1/K$.}
\STATE{Initialize $\textrm{DTBset}=\emptyset$ for solved DTB approximation. Initialize solution $u^0=u(0, z)$.}
\FOR{$k=0, \dots, K-1$}
\STATE{Update the parameter $\theta$ if necessary.}
\STATE{
Generate samples $z^j$ from some reference distribution $\lambda$ for $j=1,\dots,N$}.
\STATE{Choose a suitable DTB base $f_{\theta}$, compute the projection vector $p^k=P(\theta; F[u^k])$.}
\STATE{Apply least squares method to solve $\alpha^{k}$ from $G(\theta)\alpha^{k} =p^k$.}
\STATE{Update the parameters of $\theta$ and repeat the above two steps if the projection error is larger than the tolerance.}
\STATE{Denote $\theta^k=\theta$; Update $u^{k+1}(z)=u^{k}(z)+h \cdot \frac{\partial}{\partial\theta}f_{\theta^k}(z) \cdot \alpha^k$.}
\STATE{Update $\textrm{DTBset}+=\{(\theta^k, \alpha^k)\}$.}
\ENDFOR
\STATE{\textbf{Output}: Solution $u^K$ and $\textrm{DTBset}$. }
\end{algorithmic}
\end{algorithm}

When the second option is selected, that is, \(f_\theta\) varies smoothly with respect to \(\theta\) and that \(\alpha^k\) remains bounded, and that the forward update rule \(\theta^{k+1} = \theta^k + h \alpha^k\) is selected, then the following holds.
\begin{proposition}\label{prop: diff between param and func update}
    Suppose the solution \(u^k\) is updated via equation \eqref{eq: func update}, and the parameters are updated as \(\theta^{k+1} = \theta^k + h \alpha^k\). Assume the neural network \(f_\theta\) is smooth in \(\theta\), and \(\partial^2_\theta f_\theta^k(z)\) is uniformly bounded by a positive constant \(C_0\):
    \begin{align}
        |\partial^2_\theta f_{\theta^k}(z)| \leq C_0, \quad \forall z,
    \end{align}
    then, for sufficiently small \(h\) and bounded $|\alpha^k|_{l^2}$, the difference between the DTB solution update \(u^{k+1} - u^k\) and the parameter-based function difference \(f_{\theta^{k+1}} - f_{\theta^k}\) is of the second order in \(h\), i.e.,
    \begin{align}
        f_{\theta^{k+1}} - f_{\theta^k} = (u^{k+1} - u^k) + \mathcal{O}(h^2|\alpha^k|_{l^2}^2C_0).
    \end{align}
    \begin{proof}
        This result follows directly from Taylor's expansion. Its details are omitted. 
    \end{proof}
\end{proposition}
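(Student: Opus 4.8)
The plan is to expand $f_{\theta^{k+1}} = f_{\theta^k + h\alpha^k}$ by Taylor's theorem in the parameter variable around $\theta^k$. Writing the expansion to second order,
\begin{align}
    f_{\theta^{k+1}}(z) = f_{\theta^k}(z) + h\,\frac{\partial}{\partial\theta}f_{\theta^k}(z)\cdot\alpha^k + \frac{h^2}{2}\,(\alpha^k)^\top\,\partial^2_\theta f_{\theta^k}(z)\,\alpha^k + \cdots,
\end{align}
where the precise remainder is controlled through the uniform bound $|\partial^2_\theta f_{\theta^k}(z)| \le C_0$ together with the boundedness of $|\alpha^k|_{l^2}$. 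First I would invoke the Lagrange (or integral) form of the remainder in Taylor's theorem applied to the map $s \mapsto f_{\theta^k + s\alpha^k}(z)$ on $s\in[0,h]$, so that the remainder term is $\tfrac12 h^2 (\alpha^k)^\top \partial^2_\theta f_{\theta^k + \xi\alpha^k}(z)\,\alpha^k$ for some $\xi\in[0,h]$; the stated hypothesis that $\partial^2_\theta$ is uniformly bounded (for $h$ small enough the argument $\theta^k + \xi\alpha^k$ stays in a region where the bound applies) then gives the bound $\mathcal{O}(h^2|\alpha^k|_{l^2}^2 C_0)$ on this term.

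Next I would recognize that the first-order term $h\,\frac{\partial}{\partial\theta}f_{\theta^k}(z)\cdot\alpha^k$ is exactly $h\cdot TB(f_{\theta^k};\alpha^k)$, which by the update rule \eqref{eq: func update} equals $u^{k+1}(z) - u^k(z)$. Substituting this identification into the Taylor expansion yields
\begin{align}
    f_{\theta^{k+1}}(z) - f_{\theta^k}(z) = \big(u^{k+1}(z) - u^k(z)\big) + \mathcal{O}(h^2|\alpha^k|_{l^2}^2 C_0),
\end{align}
which is the claimed estimate. The only remaining care is bookkeeping on the norm in which the $\mathcal{O}(h^2)$ is measured (pointwise in $z$, or in $L^2$ against the reference measure); since the second-derivative bound is assumed uniform in $z$, both interpretations go through, with the $L^2$ version picking up the measure of the domain or the normalization of $\lambda$ as an additional harmless constant.

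The main obstacle — to the extent there is one in such a routine argument — is making the remainder estimate rigorous when $\theta$ ranges over an unbounded set: the hypothesis $|\partial^2_\theta f_{\theta^k}(z)| \le C_0$ is stated at the iterates $\theta^k$, but the Lagrange remainder evaluates the Hessian at an intermediate point $\theta^k + \xi\alpha^k$. One must either strengthen the hypothesis to a uniform bound on a neighborhood of the trajectory, or use the integral form of the remainder and note that for $h$ sufficiently small the segment $[\theta^k, \theta^k + h\alpha^k]$ lies in any prescribed neighborhood of $\theta^k$ (using $|\alpha^k|_{l^2}$ bounded), so the constant $C_0$ may be taken as a supremum over that neighborhood. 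This is precisely why the proposition's hypotheses include both "$h$ sufficiently small" and "$|\alpha^k|_{l^2}$ bounded"; with those in place the argument is a direct application of Taylor's theorem, which is why the authors omit the details.
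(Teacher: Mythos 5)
Your proposal is correct and follows exactly the route the paper intends: the paper's proof is simply the remark that the result ``follows directly from Taylor's expansion,'' and your expansion of $s\mapsto f_{\theta^k+s\alpha^k}(z)$ with the first-order term identified as $u^{k+1}-u^k$ via \eqref{eq: func update} and the remainder bounded by $C_0$ and $|\alpha^k|_{l^2}^2$ is precisely the omitted detail. Your observation that the Hessian bound should really hold on a neighborhood of the segment $[\theta^k,\theta^k+h\alpha^k]$ is a fair refinement of the stated hypothesis, but it does not change the argument.
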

\begin{remark}
\label{rk: extra error in LIT}
In both the LIT-type methods and the DTB method, the vector $\alpha$ is computed by solving equation \eqref{eq: J alpha solution}. The DTB method uses \eqref{eq: func update} to update the solution directly, while the LIT method adapts the parameters, and uses $f_{\theta^k} (z)$ to approximate the solution. In the LIT case, the change in the solution at step $k$ is given by \(f_{\theta^{k+1}} - f_{\theta^k}\). The later approach introduces an additional error term for approximating the right hand side functional $F[u]$:
\begin{align}
\epsilon^{\mathrm{Taylor}} =\frac{1}{h}(f_{\theta^{k+1}} - f_{\theta^k})-\frac{\partial}{\partial\theta}f_{\theta^k}(z) \cdot \alpha^k= \frac{1}{h}\left[(f_{\theta^{k+1}} - f_{\theta^k}) - (u^{k+1} - u^k)\right],    
\end{align}
at each time step. By Proposition \ref{prop: diff between param and func update}, this error is small only when the DNN is smooth, the vector $\alpha^k$ is bounded, and $h$ is small. This may be difficult to satisfy, especially when $F$ is nonlinear and the dimension $d$ is high. 
\end{remark}

One numerical experiment in Section \ref{sec: num verify taylor error} is designed to empirically validate the analysis in Remark \ref{rk: extra error in LIT} and demonstrates the practical impact of the Taylor approximation error $\epsilon^{\mathrm{Taylor}}$, inherent in parameter evolution schemes such as LIT. We compare the performance of DTB and LIT in challenging high-dimensional nonlinear problems where the difference can be significant.

\subsubsection{Implicit and higher order schemes}\label{sec: midpoint euler scheme for heat}
The DTB method can be combined with other traditional time integration schemes, including implicit ones. In this subsection, we illustrate how to build a higher-order, unconditionally stable time integrator using a DTB basis. For clarity, we take the implicit trapezoidal scheme for the heat equation using a fixed DTB basis as an example. 

Consider the following equation with periodic boundary condition.
\begin{align}
\label{eq: heat}
    &\partial_t u(t, z) = \Delta u(t, z),\\
    &u(0, z) = \phi(z).
\end{align}

We apply the implicit trapezoidal rule for the time discretization, 
\begin{align}\label{eq: midpoint euler u level}
    u^{k+1} &= u^k + \frac{h}{2}\left\{\Delta u^k+\Delta u^{k+1}\right\}.
\end{align}
To construct the DTB solver for this scheme, we must choose the optimal coefficients $\alpha^k$ so that the solution is expressed by
\begin{align}\label{eq: midpoint euler u level 1}
    u^{k+1} &= u^k + h \frac{\partial}{\partial\theta}f_{\theta^{k}} \cdot \alpha^{k}.
\end{align}

Since equation \eqref{eq: heat} is linear, we choose the same DTB bases for all time steps, that is, $\theta^k=\theta^0$ for all $k\geq 0$. In this case, we do not adapt the DNN base that generates the DTB. This leads to  
\begin{align}
\label{eq: DTB with same theta}
    u^{k+1} =u^{k} + h\cdot\frac{\partial}{\partial\theta}f_{\theta^{k}} \cdot \alpha^{k}= \phi + h\cdot\frac{\partial}{\partial\theta}f_{\theta^0}(z) \cdot \sum_{i=0}^{k} \alpha^i \eqqcolon  \phi + h\cdot\frac{\partial}{\partial\theta}f_{\theta^0}(z) \cdot s^{k+1}.
\end{align}
Here we introduce the cumulative coefficient $s^k = \sum_{i=0}^{k-1}\alpha^i$ for convenience of notation.

Plugging the DTB expression \eqref{eq: DTB with same theta} in the time discretization rule \eqref{eq: midpoint euler u level} gives 
\begin{align}
\label{eq: heat cn iteration}
    \frac{\partial}{\partial\theta}f_{\theta^0}(z) \cdot s^{k+1} = \frac{\partial}{\partial\theta}f_{\theta^0}(z) \cdot s^k + \Delta \phi + \frac{1}{2}\Delta\frac{\partial}{\partial\theta} f_{\theta^0}(z) \cdot (s^k+s^{k+1}).
\end{align}
Multiplying $\frac{\partial}{\partial\theta}f_{\theta^0}(z)^\top$ from the left to both sides and integrating with respect to $z$, we obtain
\begin{align}
    &\int \frac{\partial}{\partial\theta}f_{\theta^0}(z)^\top\frac{\partial}{\partial\theta}f_{\theta^0}(z) \cdot s^{k+1} dz  \nonumber \\ = & \int (\frac{\partial}{\partial\theta}f_{\theta^0}(z)^\top\frac{\partial}{\partial\theta}f_{\theta^0}(z) \cdot s^k + \Delta \phi + \frac{1}{2}\frac{\partial}{\partial\theta}f_{\theta^0}(z)^\top\Delta\frac{\partial}{\partial\theta} f_{\theta^0}(z) \cdot (s^k+s^{k+1}))dz.
    \label{eq: heat eq update rule 1}
\end{align}
For convenience, we define 
\begin{align}
    & A(\theta^0)=- \int \frac{\partial}{\partial\theta}f_{\theta^0}(z)^\top\Delta\frac{\partial}{\partial\theta} f_{\theta^0}(z)dz=\int \nabla \frac{\partial}{\partial\theta}f_{\theta^0}(z)^\top\nabla \frac{\partial}{\partial\theta}f_{\theta^0}(z)dz, \\
    &b = \int \frac{\partial}{\partial\theta}f_{\theta^0}(z)^\top\Delta \phi\ dz.
\end{align}
The boundary term vanishes when taking the integration by parts because of the periodic boundary conditions. Then equation \eqref{eq: heat eq update rule 1} becomes
\begin{align}
    &G(\theta^0) s^{k+1} = G(\theta^0) \cdot s^k + b - \frac{1}{2}A(\theta^0) (s^k+s^{k+1}).
\end{align}
Solving it for $s^{k+1}$, we have 
\begin{align}
    s^{k+1}&=\left[G(\theta^0) + \frac{1}{2}A(\theta^0)\right]^{\dagger}\left[G(\theta^0) - \frac{1}{2}A(\theta^0)\right]s^{k} + b.
\end{align}
This is the implicit DTB solver for the heat equation. Its corresponding algorithm is given in Algorithm \ref{alg: DTB midpoint Euler for heat}. 
\begin{algorithm}[H]
\caption{Implicit trapezoidal DTB for heat equation}
\label{alg: DTB midpoint Euler for heat}
\begin{algorithmic}[1]
\STATE{Set terminal time $t_1$ and number of steps $K$. Set step size $h=t_1/K$.}
\STATE{Initialize neural network $u_{\theta}$ with parameters $\theta^0$. Initialize solution $\widehat{u}^0=u(0, z)$ and $s^0=\Vec{0}$.}
\STATE{Generate samples $z^j$ from reference distribution $\lambda$ for $j=1,\dots,N$.}
\STATE{Compute the projection vector $b=P(\theta^0; \Delta \phi)=\frac{1}{N}\sum_{i=1}^N\frac{\partial}{\partial\theta}f_{\theta^0}(z^j)^\top\Delta \phi(z^j)$.}
\STATE{Form the empirical matrices $G, A$ as linear operators.}
\FOR{$k=0, \dots, K-1$}
\STATE{Compute $w^k=\left[G(\theta^0) - \frac{1}{2}A(\theta^0)\right]s^{k}$.}
\STATE{Apply least squares method to solve $\alpha$ from $\left[G(\theta^0) + \frac{1}{2}A(\theta^0)\right]\alpha =w^k$.}
\STATE{Denote $s^{k+1}=\alpha + b$; Update $u^{k+1}(z)=\phi(z)+h \cdot \frac{\partial}{\partial\theta}f_{\theta^0}(z) \cdot s^{k+1}$.}
\ENDFOR
\STATE{\textbf{Output}: Solution $u^K$. }
\end{algorithmic}
\end{algorithm}

\subsubsection{Application to Wasserstein flows}\label{sec: WF intro}
The Wasserstein gradient flow (WGF) and Wasserstein Hamiltonian flow (WHF) are PDEs defined on the Wasserstein manifold, which describe the density evolution under certain dynamics. Both can be reformulated as evolutionary equations of push-forward map $T$. In this section, we propose to solve them using the DTB framework. Their numerical examples are presented in Section \ref{sec: wf numerical}.

\textbf{WGF with interaction potential. } We consider the WGF given by
\begin{align}
\label{def: WGF}
    \frac{\partial \rho}{\partial t}= - \textrm{grad}_{W}\mathcal{F}(\rho), \quad \rho(0, z)=\lambda(z),
\end{align}
where \(\text{grad}_W\) denotes the Wasserstein gradient \cite{otto-PME}. 

Given a reference density $\lambda$ and a smooth map $T:\mathbf{R}^d \rightarrow \mathbf{R}^d$, $T$ induces a push-forward distribution with density $T_\sharp \lambda$, defined by
\begin{align*}
    \int_E T_{\sharp}\lambda(x)\,dx = \int_{T^{-1}(E)} \lambda(z) \, dz ,
\end{align*}
for any measurable set \(E \subset \mathbb{R}^d\), where $T^{-1}(E)$ is the pre-image of $E$. 

The WGF has an equivalent formulation through the push-forward map \cite{JIN2025113660}:
\begin{proposition}
\cite{JIN2025113660} Assume $T(t, \cdot):\mathbb{R}^d\rightarrow \mathbb{R}^d$ is smooth for any $t$,  $T(0, \cdot)=Id$, and it solves the following equation,
\begin{align}
    \label{def: operator dyn wgf}
    \dot{ T}(t, z)=-\nabla_X\frac{\delta}{\delta\rho}\mathcal{F}(T(t, \cdot)_{\sharp}\lambda(z), \cdot)\circ T(z) ,
\end{align}    
where $\rho(t)=T(t, \cdot)_{\sharp}\lambda$ is the pushforward density of $\lambda$ through $T(t, \cdot)$. Then $\rho$ solves the WGF \eqref{def: WGF}.
\end{proposition}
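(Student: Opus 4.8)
The plan is to verify directly that if $T(t,\cdot)$ solves the operator ODE \eqref{def: operator dyn wgf}, then the pushforward density $\rho(t) := T(t,\cdot)_\sharp\lambda$ satisfies the WGF \eqref{def: WGF}. The natural route is the continuity-equation characterization of pushforward flows: a time-dependent family of densities $\rho(t)$ obtained as $\rho(t) = T(t,\cdot)_\sharp\lambda$ with $T(0,\cdot)=\mathrm{Id}$ satisfies $\partial_t\rho + \nabla\cdot(\rho\,v) = 0$, where the Eulerian velocity field $v(t,\cdot)$ is defined by the relation $v(t,T(t,z)) = \dot T(t,z)$ (equivalently $v(t,\cdot) = \dot T(t,\cdot)\circ T(t,\cdot)^{-1}$, using smoothness and invertibility of $T(t,\cdot)$). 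So first I would recall (or briefly derive) this standard fact by differentiating $\int_E \rho(t,x)\,dx = \int_{T(t,\cdot)^{-1}(E)}\lambda(z)\,dz$ in $t$, or equivalently by testing against a smooth compactly supported $\varphi$ and differentiating $\int \varphi(x)\rho(t,x)\,dx = \int \varphi(T(t,z))\lambda(z)\,dz$.

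Second, I would read off the velocity field from \eqref{def: operator dyn wgf}: since $\dot T(t,z) = -\nabla_X\frac{\delta}{\delta\rho}\mathcal{F}(\rho(t),\cdot)\big|_{X=T(t,z)}$, composing with $T(t,\cdot)^{-1}$ gives $v(t,x) = -\nabla_x\frac{\delta}{\delta\rho}\mathcal{F}(\rho(t),x)$. Plugging this into the continuity equation yields
\begin{align}
\partial_t\rho = -\nabla\cdot\bigl(\rho\,v\bigr) = \nabla\cdot\Bigl(\rho\,\nabla\tfrac{\delta}{\delta\rho}\mathcal{F}(\rho)\Bigr).
\end{align}
Third, I would invoke Otto's identification of the Wasserstein gradient, namely $\mathrm{grad}_W\mathcal{F}(\rho) = -\nabla\cdot\bigl(\rho\,\nabla\tfrac{\delta}{\delta\rho}\mathcal{F}(\rho)\bigr)$, so the right-hand side above equals $-\mathrm{grad}_W\mathcal{F}(\rho)$, which is exactly \eqref{def: WGF}. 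The initial condition is immediate: $T(0,\cdot)=\mathrm{Id}$ gives $\rho(0,\cdot) = \mathrm{Id}_\sharp\lambda = \lambda$.

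Since this proposition is quoted from \cite{JIN2025113660}, I would keep the argument short and cite that reference for the technical regularity hypotheses. The main obstacle is not conceptual but regularity bookkeeping: one needs $T(t,\cdot)$ to be a smooth diffeomorphism (so that $T(t,\cdot)^{-1}$ exists and the change of variables/Jacobian manipulations are valid), the pushforward density $\rho(t)$ to be well-defined and smooth enough for $\frac{\delta}{\delta\rho}\mathcal{F}(\rho)$ and its gradient to make sense, and enough decay to justify differentiating under the integral sign and discarding boundary terms in the integration by parts against test functions. Under the smoothness assumptions stated in the proposition (and following \cite{JIN2025113660}), these are satisfied, so the computation goes through cleanly; a rigorous treatment would either assume these a priori or restrict to a class of $\mathcal{F}$ (e.g.\ potential, interaction, and internal energy terms) for which the well-posedness of \eqref{def: operator dyn wgf} and the requisite regularity of $T$ are known.
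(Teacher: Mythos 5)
The paper itself gives no proof of this proposition---it is imported verbatim with a citation to \cite{JIN2025113660}---so there is no internal argument to compare against; your derivation is the standard one and it is correct: the pushforward family $\rho(t)=T(t,\cdot)_\sharp\lambda$ satisfies the continuity equation with Eulerian velocity $v(t,\cdot)$ determined by $v(t,T(t,z))=\dot T(t,z)$, equation \eqref{def: operator dyn wgf} identifies $v(t,x)=-\nabla_x\frac{\delta}{\delta\rho}\mathcal{F}(\rho(t),x)$, and Otto's formula $\mathrm{grad}_W\mathcal{F}(\rho)=-\nabla\cdot\bigl(\rho\,\nabla\frac{\delta}{\delta\rho}\mathcal{F}(\rho)\bigr)$ turns $\partial_t\rho=\nabla\cdot\bigl(\rho\,\nabla\frac{\delta}{\delta\rho}\mathcal{F}(\rho)\bigr)$ into \eqref{def: WGF}, with $\rho(0)=\lambda$ from $T(0,\cdot)=\mathrm{Id}$. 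One small refinement: prefer the test-function route you mention, $\frac{d}{dt}\int\varphi(T(t,z))\lambda(z)\,dz=\int\nabla\varphi(T(t,z))\cdot\dot T(t,z)\,\lambda(z)\,dz$, over the formula $v=\dot T\circ T(t,\cdot)^{-1}$, since the proposition assumes smoothness but not invertibility of $T(t,\cdot)$, and the weak continuity equation with the stated velocity follows directly from the pushforward identity without ever inverting $T$.
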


If $\mathcal{F}$ is taken as the interaction energy functional
\begin{equation}
\label{eq: interact-F}
\mathcal{F}(\rho) = \frac{1}{2}\iint J(|x-y|)\rho(x)\rho(y)\,dx\,dy,
\end{equation}
where $J(|x-y|)$ is an interaction kernel, \eqref{def: operator dyn wgf} becomes
\begin{equation}\label{eq: wgf T interaction}
\frac{d}{dt}T(t, z) = - \nabla (J * \rho)\circ T(t, z), \quad T(0, \cdot)=Id,
\end{equation}    
Or equivalently, for a particle $X(t)=T(t, z)$, we have 
\begin{equation}\label{eq: wgf T interaction X level}
\frac{d}{dt}X(t) = - \nabla (J * \rho)(X(t)), \quad X(0)=z,
\end{equation}    
$J*\rho(x)=\int J(|x-y|)\rho(y)dy$ denotes the convolution of $J$ with $\rho$. 

We apply the forward Euler scheme given in Algorithm~\ref{alg: forward euler DTB} to update the approximate pushforward map $X^{k}=T(\{\theta^i\}_{i=1}^k; Z)$ iteratively,
\begin{align}\label{eq: wgf particle iteration}
    X^{k+1} =X^{k} + h\mathcal{K}_{\theta;\rho^k}[-\nabla(J * \rho^k)](X^{k}),\quad X^{0}=Z.
\end{align}
where $Z$ is the random variable with density $\lambda$, $\rho^k$ is the density of $X^k$, and $\mathcal{K}_{\theta;\rho}$ is the projection operator on the tangent bundle $\mathcal{T}_{\theta}$. The evaluation of $\mathcal{K}_{\theta;\rho}$ can be approximated by the Monte Carlo samples which are detailed in  
Appendix~\ref{appendix: wgf_details}.

\textbf{WHF with a linear potential. }The WHF \cite{chow2020wasserstein} is given by
\begin{subequations}
\label{eq:WHF-in-dual-coordinates}
\begin{align}
\label{eq: whf op}
    &\partial_t\rho = \frac{\delta}{\delta\Phi}\mathcal{H}(\rho, \Phi),\\
    &\partial_t\Phi = -\frac{\delta}{\delta\rho}\mathcal{H}(\rho, \Phi),
\end{align}
\end{subequations}
where the Hamiltonian \(\mathcal{H}\) is defined as
\begin{align}
\label{eq:WHF-H}
\mathcal{H}(\rho, \Phi) = \int_{\mathbb{R}^d} \frac{1}{2}|\nabla \Phi(z)|^2\rho(z) dx+\mathcal{F}(\rho).
\end{align}
As shown in \cite{doi:10.1137/23M159281X}, the above system can also be reformulated in terms of the push-forward map.
\begin{proposition}
\cite{doi:10.1137/23M159281X} Assume $T(t, \cdot):\mathbb{R}^d\rightarrow \mathbb{R}^d$ is smooth for any $t$, $T(0, \cdot)=Id$, and it solves the following equation,
\begin{align}
    \label{def: operator dyn whf}
    \frac{d^2}{dt^2}T(t, z)=-\nabla_X\frac{\delta}{\delta\rho}\mathcal{F}(T(t, \cdot)_{\sharp}\lambda(z), \cdot)\circ T(z) ,
\end{align}    
where $\rho(t)=T(t, \cdot)_{\sharp}\lambda$ is the pushforward density of $\lambda$ through $T(t, \cdot)$. Then $\rho$ solves the WHF \eqref{eq:WHF-in-dual-coordinates}.
\end{proposition}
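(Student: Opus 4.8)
The plan is to recover the dual potential $\Phi$ from the Lagrangian map $T$ and check that $(\rho,\Phi)$ solves \eqref{eq:WHF-in-dual-coordinates}; conceptually this amounts to recognizing \eqref{def: operator dyn whf} as the characteristic system of the Hamilton--Jacobi part of the WHF. First I would compute the two variational derivatives of the Hamiltonian \eqref{eq:WHF-H}: a short calculation gives $\frac{\delta}{\delta\Phi}\mathcal H(\rho,\Phi) = -\nabla\cdot(\rho\nabla\Phi)$ and $\frac{\delta}{\delta\rho}\mathcal H(\rho,\Phi) = \frac12|\nabla\Phi|^2 + \frac{\delta}{\delta\rho}\mathcal F(\rho)$, so \eqref{eq:WHF-in-dual-coordinates} is equivalent to the pair consisting of the continuity equation $\partial_t\rho + \nabla\cdot(\rho\nabla\Phi)=0$ and the Hamilton--Jacobi equation $\partial_t\Phi + \frac12|\nabla\Phi|^2 + \frac{\delta}{\delta\rho}\mathcal F(\rho)=0$. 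It therefore suffices to construct a $\Phi$ solving these two equations.

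Since $T(0,\cdot)=Id$ and $T(t,\cdot)$ is smooth, $T(t,\cdot)$ is a diffeomorphism for short time, so one can define an Eulerian velocity $v(t,\cdot)$ on $\operatorname{supp}\rho(t)$ by $v(t,T(t,z))=\dot T(t,z)$. The classical pushforward/transport lemma then gives that $\rho(t)=T(t,\cdot)_\sharp\lambda$ solves $\partial_t\rho + \nabla\cdot(\rho v)=0$. Differentiating the relation $v\circ T=\dot T$ once more in $t$ and invoking \eqref{def: operator dyn whf} shows that $v$ is carried along its own flow with a gradient forcing,
\begin{align*}
  \partial_t v + (v\cdot\nabla)v \;=\; -\,\nabla_x\tfrac{\delta}{\delta\rho}\mathcal F(\rho(t),x) \qquad \text{on } \operatorname{supp}\rho(t).
\end{align*}

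The central step, and the place where this second-order case is genuinely harder than the first-order WGF proposition (whose driving velocity $-\nabla\frac{\delta}{\delta\rho}\mathcal F(\rho)$ is manifestly a gradient), is to promote $v$ to a gradient field for all $t$. Writing $\alpha_t=v(t,\cdot)^\flat$, the identity $((v\cdot\nabla)v)^\flat = \iota_v\,d\alpha_t + d(\tfrac12|v|^2)$ recasts the displayed equation as $\partial_t\alpha_t + \mathcal L_v\alpha_t = d\big(\tfrac12|v|^2 - \tfrac{\delta}{\delta\rho}\mathcal F(\rho)\big)$; applying the exterior derivative and using $d^2=0$, $d\mathcal L_v=\mathcal L_v d$ yields $\partial_t(d\alpha_t)+\mathcal L_v(d\alpha_t)=0$. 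Hence $T(t,\cdot)^*(d\alpha_t)=d\alpha_0$ is constant in $t$, so $d\alpha_t\equiv 0$ as soon as $d\alpha_0=0$ — this is exactly the (implicit) requirement that the prescribed initial velocity $\dot T(0,\cdot)$ be curl-free, i.e. $\dot T(0,\cdot)=\nabla\Phi_0$, matching the WHF initial datum $\Phi(0)=\Phi_0$. On a simply connected domain $d\alpha_t=0$ produces a potential $\Phi(t,\cdot)$, unique up to a function of $t$ alone, with $v=\nabla\Phi$; substituting back turns the displayed equation into $\nabla\big(\partial_t\Phi + \tfrac12|\nabla\Phi|^2 + \tfrac{\delta}{\delta\rho}\mathcal F(\rho)\big)=0$, and absorbing the spatially constant term into $\Phi$ gives the Hamilton--Jacobi equation. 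Combined with the continuity equation already obtained, $(\rho,\Phi)$ solves \eqref{eq:WHF-in-dual-coordinates}.

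I expect the main obstacle to be this curl-free propagation argument together with the regularity bookkeeping it presupposes: one must justify that $d\alpha_t$ genuinely obeys a homogeneous transport equation (a Cauchy-invariance/vorticity-advection fact) and ensure, over the relevant time interval, that $T(t,\cdot)$ stays a diffeomorphism, $\rho(t)$ a smooth positive density, and $\frac{\delta}{\delta\rho}\mathcal F(\rho)$ smooth, so that $v$, $\nabla\Phi$ and the forcing term are all well defined. For the interaction energy \eqref{eq: interact-F} these are transparent; for general $\mathcal F$ I would either invoke the smoothness hypotheses already imposed in \cite{doi:10.1137/23M159281X} or restrict to the short-time smooth regime. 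The remaining steps are routine differentiation and the standard Lagrangian/Eulerian equivalence for transport equations.
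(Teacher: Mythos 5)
The paper does not prove this proposition at all: it is imported verbatim from \cite{doi:10.1137/23M159281X} as a citation, so there is no in-paper argument to compare against. Judged on its own merits, your proof is sound and is essentially the standard Lagrangian-to-Eulerian argument one would expect the cited reference to use: the variational derivatives of \eqref{eq:WHF-H} are computed correctly, the continuity equation follows from the pushforward lemma, the identity $\bigl((v\cdot\nabla)v\bigr)^{\flat}=\iota_v\,d\alpha_t+d\bigl(\tfrac12|v|^2\bigr)$ and the resulting homogeneous transport of $d\alpha_t$ are correct, and the reconstruction of $\Phi$ up to a spatial constant is handled properly. You also put your finger on a real gap in the \emph{statement} itself: as written, the proposition prescribes $T(0,\cdot)=Id$ but says nothing about $\dot T(0,\cdot)$, and without the hypothesis that the initial velocity is a gradient (equivalently $d\alpha_0=0$, i.e.\ $\dot T(0,\cdot)=\nabla\Phi_0$) the conclusion fails --- a rotational initial velocity yields a $v$ that is never a gradient and hence no admissible $\Phi$. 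The paper's own numerical setup silently supplies this, since it initializes $\Lambda^0=\nabla\Phi(0,Z)$, but your proof makes the needed hypothesis explicit, which is an improvement over the bare citation. The remaining caveats you list (short-time diffeomorphism property, smoothness of $\frac{\delta}{\delta\rho}\mathcal F(\rho)$, simple connectivity for the Poincar\'e lemma) are exactly the right ones and are harmless in the settings the paper actually uses.
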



Consider a special case where $ \mathcal{F}(\rho) = \int V(z)\rho(z)dx$, \eqref{def: operator dyn whf} can be simplified to:
\begin{align}
\label{eq: whf linear op}
    \frac{d^2}{dt^2}T(t,z)=-\nabla V(\cdot)\circ T(z).
\end{align}

Similarly to the first-order case in \eqref{eq: wgf particle iteration}, we solve the second-order system \eqref{eq: whf linear op} by first converting it to a coupled system for position $X$ and velocity $\Lambda = \dot{X}$. The acceleration field $-\nabla V(\cdot)$ is approximated at each step by projecting it on the DNN tangent space $\mathcal{T}_{\theta}$ using Algorithm~\ref{alg: forward euler DTB}. The numerical update at step $k$ is:
\begin{align}
    &X^{k+1} =X^{k} + \frac{h}{2}(\Lambda^k+\Lambda^{k+1}),\quad X^{0}=Z.\\
    &\Lambda^{k+1}= \Lambda^{k} + h\mathcal{K}_{\theta;\rho^k}[-\nabla V(\cdot)](X^{k}),\quad \Lambda ^{0}=\nabla \Phi(0, Z).
\end{align}

\subsection{Theoretical error estimation} \label{sec: error estimate}
From error analysis viewpoint, DTB behaves similarly to traditional numerical schemes with one key difference: the spatial discretization error is replaced by the error arising from the function approximation step. To be more specific, assume $S$ is a \textbf{time discretization rule} and $\hat{v}$ is the corresponding numerical solution, i.e., $\hat{v}^{k+1}=\hat{v}^k + hS\hat{v}^k$. For example, the forward Euler scheme corresponds to 
\begin{align}
    S_1=F[\cdot] = \Delta,
\end{align}
and the implicit trapezoidal scheme \eqref{eq: midpoint euler u level} for heat equation 
corresponds to:
\begin{align}
    S_2 = \frac{1}{h}(I-\frac{h}{2}\Delta)^\dagger(I+\frac{h}{2}\Delta) - \frac{1}{h}I,
\end{align}
where $\dagger$ stands for the pseudo inverse of the operator. Notice that $S$ is an operator in the function space \textbf{without} space discretization. 

Let \(\hat{u}\) represent the DTB solution constructed with the same time discretization rule \(S\), i.e., the DTB solution at the next step is computed from $\hat{u}^{k+1} = \hat{u}^k + h\mathcal{K}_{\theta^k}[S\hat{u}^k]$ where $\mathcal{K}_{\theta^k}$ is the projection operator on the DTB basis as defined in \eqref{eq: func proj op def}. The initial values for both $\hat{v}$ and $\hat{u}$ are set to the initial condition $u(0, \cdot)$,
\begin{align}\label{eq: num scheme initial}
    \hat{v}^0=\hat{u}^0=u(0, \cdot)=\phi.
\end{align}
Then the error in the DTB solution can be estimated as given in the following theorem.

\begin{theorem}
    Assume the numerical iteration rule $S$ is linear and satisfy the following inequality
    \begin{align}
        \|(I+hS)w\|_{L^2} \leq (1+hC_1)\|w\|_{L^2},
    \end{align}
    where $C_1>0$ is a constant. Denote $\delta^k=\|\mathcal{K}_{\theta^k}[S\hat{u}^k]-S\hat{u}^k\|_{L^2}$ as the error in solving the linear system at step $k$, then the error $\epsilon^{DN}(k)=\|\hat{u}^k-\hat{v}^k\|_{L^2}$ between DTB solution $\hat{u}$ and numerical solution $\hat{v}$ satisfies the inequality
        \begin{align}
        \epsilon^{DN}(k+1)
        &\leq (1+hC_1)\epsilon ^{DN}(k) + \delta^k.
        \end{align}
    Furthermore, if $\delta^k$ can be bounded from above by $\delta_{1}$, then the error from DTB solution, $\epsilon^D(k) = \|\hat{u}^k-u(kh)\|_{L^2}$, can be upper bounded by
    \begin{align}\label{ineq: DTB scheme error}
        \epsilon^D(k) &\leq \epsilon^S(k) + \frac{\delta_1}{C_1h} \left[e^{C_1hk} -1\right].
    \end{align}
    where $\epsilon ^S(k)=\|\hat{v}^k-u(kh)\|_{L^2}$ is the solution error from the numerical scheme, and \(u(t)\) is the exact solution to PDE \eqref{eq: evolutional pde}.

    \begin{proof}
    Notice that $\hat{u}^{k+1}=(I+hS)\hat{u}^k + h(\mathcal{K}_{\theta^k}[S\hat{u}^k]-S\hat{u}^k)$ and $\hat{v}^{k+1}=(I+hS)\hat{v}^k$. By triangle inequality we have:
    \begin{align}
        \epsilon^{DN}(k+1) &= \|(I+hS)\hat{u}^k + h(\mathcal{K}_{\theta^k}[S\hat{u}^k]-S\hat{u}^k)-(I+hS)\hat{v}^k\|_{L^2} \nonumber\\
        &=\|(I+hS)(\hat{u}^k-\hat{v}^k)+ h(\mathcal{K}_{\theta^k}[S\hat{u}^k]-S\hat{u}^k)\|_{L^2}\nonumber\\
        &\leq \|(I+hS)(\hat{u}^k-\hat{v}^k)\|_{L^2}+ h\|\mathcal{K}_{\theta^k}[S\hat{u}^k]-S\hat{u}^k\|_{L^2}\nonumber\\
        &\leq (1+hC_1)\epsilon ^{DN}(k) + \delta^k.
    \end{align}
        Given $\delta^k\leq \delta_1$ for all $k$, from the Grownwall inequality we get
        \begin{align}
        \epsilon^{DN}(k) &\leq (\epsilon^{\textrm{DN}}(0)+\frac{\delta_1}{C_1h} )e^{C_1hk} -\frac{\delta_1}{C_1h} =\frac{\delta_1}{C_1h} \left[e^{C_1hk} -1\right].
        \end{align}
        
        The last equality comes from the initial condition \eqref{eq: num scheme initial}. By triangle inequality, we proved \eqref{ineq: DTB scheme error}.
    \end{proof}
    \end{theorem}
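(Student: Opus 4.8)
The plan is to prove the recursive inequality first and then unwind it via a discrete Gr\"onwall argument. I would start from the two one-step update formulas: the DTB solution obeys $\hat{u}^{k+1} = (I+hS)\hat{u}^k + h(\mathcal{K}_{\theta^k}[S\hat{u}^k] - S\hat{u}^k)$ by definition of the projected scheme, while the reference numerical scheme obeys $\hat{v}^{k+1} = (I+hS)\hat{v}^k$ since $\hat{v}$ is defined without the projection step. Subtracting and using that $S$, hence $I+hS$, is linear, I get $\hat{u}^{k+1} - \hat{v}^{k+1} = (I+hS)(\hat{u}^k - \hat{v}^k) + h(\mathcal{K}_{\theta^k}[S\hat{u}^k] - S\hat{u}^k)$. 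Applying the triangle inequality in $L^2$, then the hypothesis $\|(I+hS)w\|_{L^2} \le (1+hC_1)\|w\|_{L^2}$ to the first term and the definition $\delta^k = \|\mathcal{K}_{\theta^k}[S\hat{u}^k] - S\hat{u}^k\|_{L^2}$ to the second, yields $\epsilon^{DN}(k+1) \le (1+hC_1)\epsilon^{DN}(k) + \delta^k$, which is the first claim.

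Next I would iterate this recursion. With the uniform bound $\delta^k \le \delta_1$, a standard induction gives $\epsilon^{DN}(k) \le (1+hC_1)^k \epsilon^{DN}(0) + \delta_1 \sum_{j=0}^{k-1}(1+hC_1)^j = (1+hC_1)^k\epsilon^{DN}(0) + \delta_1 \frac{(1+hC_1)^k - 1}{hC_1}$. Using the elementary bound $(1+hC_1)^k \le e^{hC_1 k}$ and the initial condition $\hat{u}^0 = \hat{v}^0 = \phi$ from \eqref{eq: num scheme initial}, which forces $\epsilon^{DN}(0) = 0$, this collapses to $\epsilon^{DN}(k) \le \frac{\delta_1}{C_1 h}(e^{C_1 h k} - 1)$. (This is exactly the ``Gr\"onwall'' line the theorem invokes; I would present it either as a one-line induction or by quoting the discrete Gr\"onwall lemma directly.)

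Finally, for the bound on $\epsilon^D(k) = \|\hat{u}^k - u(kh)\|_{L^2}$, I would simply insert the exact intermediate quantity $\hat{v}^k$: by the triangle inequality $\|\hat{u}^k - u(kh)\|_{L^2} \le \|\hat{u}^k - \hat{v}^k\|_{L^2} + \|\hat{v}^k - u(kh)\|_{L^2} = \epsilon^{DN}(k) + \epsilon^S(k)$, and substituting the bound just obtained for $\epsilon^{DN}(k)$ gives \eqref{ineq: DTB scheme error}.

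There is no serious obstacle here: the argument is a textbook error-splitting plus discrete Gr\"onwall, and every ingredient (linearity of $S$, the stability estimate, the uniform bound on $\delta^k$, and the matched initial data) is handed to us in the hypotheses. The only points requiring a little care are making sure the projection-error term enters with the factor $h$ correctly (so that the accumulated error is $O(\delta_1/C_1 h \cdot (e^{C_1 h k}-1))$ rather than one power of $h$ off) and being explicit that the scheme error $\epsilon^S(k)$ is simply imported as-is, since bounding it is the job of classical numerical analysis of the chosen time-stepping rule $S$, not of this theorem.
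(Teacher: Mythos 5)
Your proof follows the paper's argument essentially verbatim: the same one-step error recursion obtained by subtracting $\hat{v}^{k+1}=(I+hS)\hat{v}^k$ from $\hat{u}^{k+1}=(I+hS)\hat{u}^k+h(\mathcal{K}_{\theta^k}[S\hat{u}^k]-S\hat{u}^k)$, the same discrete Gr\"onwall/geometric-sum unwinding using $\epsilon^{DN}(0)=0$, and the same final triangle inequality through $\hat{v}^k$. The only caveat (shared with the paper) is that the projection term actually enters as $h\delta^k$, so the recursion written with $+\delta^k$ either implicitly assumes $h\le 1$ or, if the factor $h$ is kept, yields the even sharper bound $\frac{\delta_1}{C_1}\left[e^{C_1hk}-1\right]$; either way the stated inequality \eqref{ineq: DTB scheme error} holds.
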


The proof of the theorem reveals that the total error in the DTB framework can be systematically decomposed into three components. 
    \begin{enumerate}
      \item \emph{Sampling Error}: This error originates from the random sampling to construct the linear system. Increasing the number of samples may systematically reduce this error.
      \item \emph{Local Approximation Error}: This component arises from projecting the PDE's RHS onto the finite-dimensional tangent bundle. Enriching the basis set, periodically re-selecting the DTB base and solving the linear system with higher accuracy can mitigate this error.
      \item \emph{Numerical Integration Error}: This error is associated with the numerical scheme employed to integrate the solved approximation to $F[u]$. The choice of scheme order and discretization parameters directly influence this component. Employing higher-order or adaptive time-integration schemes can effectively control this error.
    \end{enumerate}
These sources of error are controllable and can be systematically reduced through appropriate strategies such as increasing the sample size, enriching the basis, and adopting higher-order numerical schemes. However, the precise relationship between the error and the sample size or the richness of the basis is theoretically still unclear. They are important questions that deserve further investigation.  

To conclude this section, we highlight the key advantages of the proposed method.
    \begin{itemize}
      \item \emph{No nonconvex training}: The DTB approach replaces the nonconvex training procedures with the solution of linear systems, simplifying the implementation and reducing potential issues related to local minima created by the DNN.
      \item \emph{Basis management for improved conditioning}: The framework allows for flexible updates or resets of the neural network parameters, thereby preventing basis degeneration and maintaining numerical stability throughout the evolution.
      \item \emph{Exploitation of local linear structure}: Leveraging the local linearity of the tangent bundle ensures robust and efficient updates in the function space, enhancing the performance in  stability and convergence.
      \item \emph{Mesh-free, high-dimensional scalability}: The method is inherently mesh-free and well-suited for high-dimensional problems, making it applicable to complex, high-dimensional PDEs without the need for grid discretization.
    \end{itemize}

By combining the mesh-free structure of DNN with the robustness and stability of classical linear system solvers, the DTB offers an effective alternative to solve high-dimensional PDEs.

\section{Numerical Results}
\label{sec: numerical method}
In this section, we assess the performance of the proposed DTB method through a series of numerical experiments. These examples include the approximation of the high-dimensional function, the demonstration of the proposition \ref{prop: diff between param and func update}, the heat equation, the Allen–Cahn equation, and the WGF and WHF.

\subsection{Function approximation}\label{sec: num func approx}
We begin by evaluating the DTB method on the task of approximating
$\mathcal{O}w$, where $w\!:\![-1,1]^5\!\to\!\mathbf{R}$ is a nonlinear
function and $\mathcal{O}$ is a prescribed operator.

In the experiment, we choose:
\begin{align}\label{eq: AC u0}
    w(z)&=-1+2\frac{\tilde{w}(z)+6}{13} ,
\end{align}
with
\begin{align}
    \tilde{w}(z) &=cz_1^2 + sz_2^3 + 1.5*sz_1^2*cz_5+3*\frac{1-e^{sz_2}}{1+e^{cz_4}}+2*sz_1*cz_3 \nonumber\\
    &+\log(2+cz_4*sz_1^2) * \frac{1}{e^{cz_5+0.3*sz_4}} + 3 * \log(3+cz_2+sz_5)*\frac{1}{3+sz_3},
\end{align}
where $z=(z_i)_{i=1}^5\in [-1, 1]^5$, $cz_i=\cos (\pi z_i), sz_i=\sin (\pi z_i)$.
We test three different operators:
\begin{align}
    &\mathcal{O}_1 w = 0.005*\Delta w + w - w^3,\\
    &\mathcal{O}_2 w = 0.02* \Delta w,\\
    &\mathcal{O}_3 w = w^4.
\end{align}

The base network $f_\theta=\phi_{\theta_1}\circ PL_{\theta_2}$ is a composition of Multi-component and Multi-layer Neural Networks (MMNN) $\phi_{\theta_1}$ of size (366, 25, 7)\cite{zhang2025structuredbalancedmulticomponentmultilayer}, and a periodic embedding layer $PL_{\theta_2}: [-1, 1]^5\rightarrow \mathbf{R}^{200}$ defined by
\begin{align}
    PL_{\theta_2}(z)=[\cos(\pi z_i + \psi_{i, j})]_{1\leq i\leq 5, 1\leq j\leq 40}, \quad \theta_2=\{\psi_{i, j}\}_{1\leq i\leq 5, 1\leq j\leq 40}. 
\end{align}
We use the $\cos$ function to enforce the periodic boundary condition and $\psi_{i, j}$ serves as a shift for the input variable. The total number of parameters in $PL_{\theta_2}$ is $200=5\times 40$, which also corresponds to the dimension of the output variable of $PL_{\theta_2}$.

We pre-train $f_\theta$ to minimize $\lVert f_\theta - w\rVert_{L^2}$ using the training setup in \cite{zhang2025structuredbalancedmulticomponentmultilayer} with $1000$ iterations and the parameters in $PL_{\theta_2}$ is fixed during the process. Algorithm \ref{alg: J-DTB} is used to solve the linear system for each operator~$\mathcal O_i$, with $20000$ samples and a subspace of the DTB $\mathcal{B}(f, \theta)$ with $6000$ randomly selected vectors are used to generate the linear system
\begin{align}\label{eq: alpha for approx Ow}
    \frac{\partial}{\partial\theta}f_{\theta}(z) \cdot \alpha = \mathcal{O}_i w(z).
\end{align}
To illustrate the performance, we visualize the approximation on five two-dimensional hyperplanes in
$[-1,1]^5$. They are defined by

\begin{enumerate}[label=(\alph*)]
    \item $-z_1=z_2, z_4=\frac{1}{2}(z_2-z_5), z_3=0$,
    \item $z_5=\frac{1}{2}(z_1+z_3), z_2=z_4=0$,
    \item $z_1=z_2=0.3, z_5=0.15-\frac{1}{2}z_3$,
    \item $z_1=0.4z_4+0.6z_5, z_2=z_3=0.8$,
    \item $z_2=0.75z_1+0.25z_5, z_4=0.25z_1+0.75z_5, z_3=0.5$.
\end{enumerate}

\begin{figure}[htbp]
\centering
\begin{tabular}{c}
\vspace{-18pt}  
\includegraphics[width=.95\linewidth]{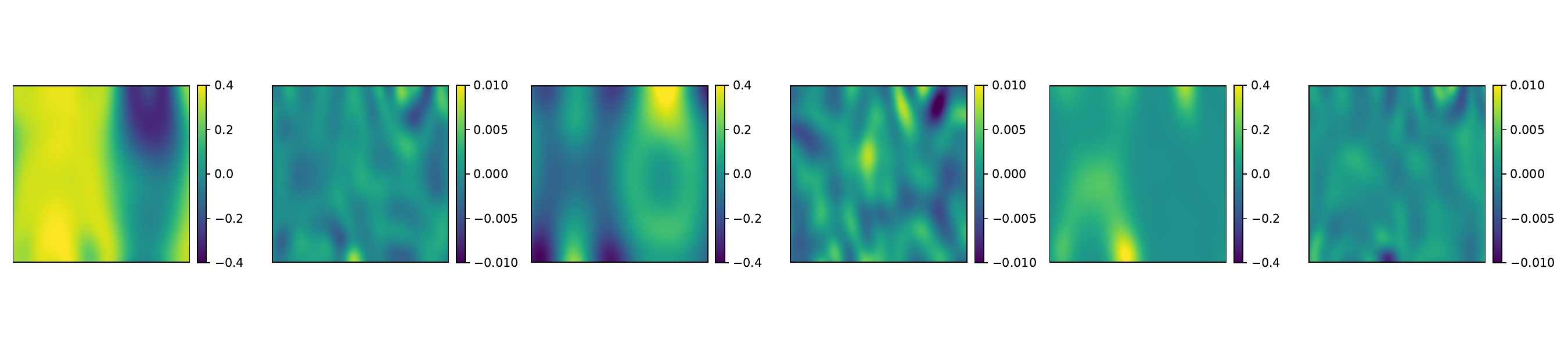}\\
\vspace{-18pt}  
\includegraphics[width=.95\linewidth]{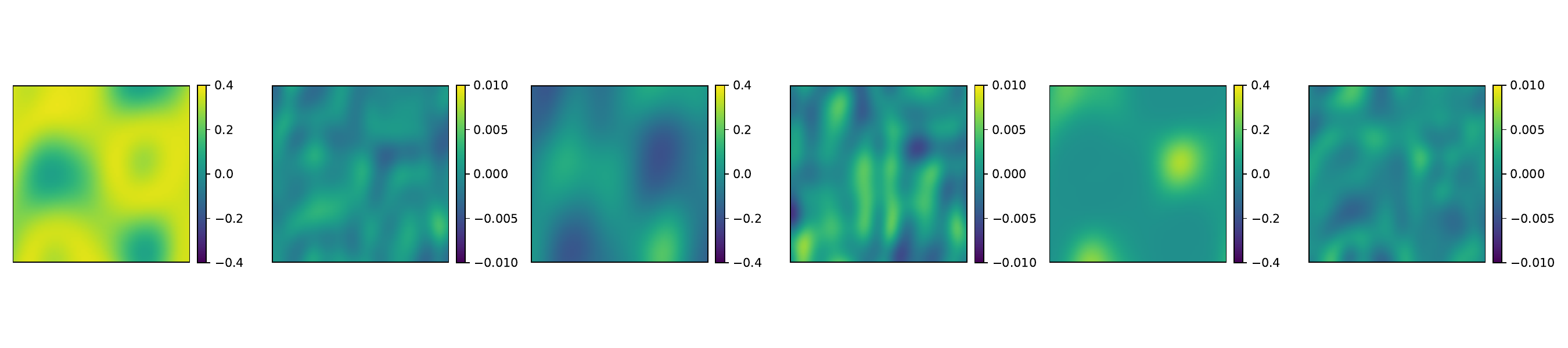}\\
\vspace{-18pt}  
\includegraphics[width=.95\linewidth]{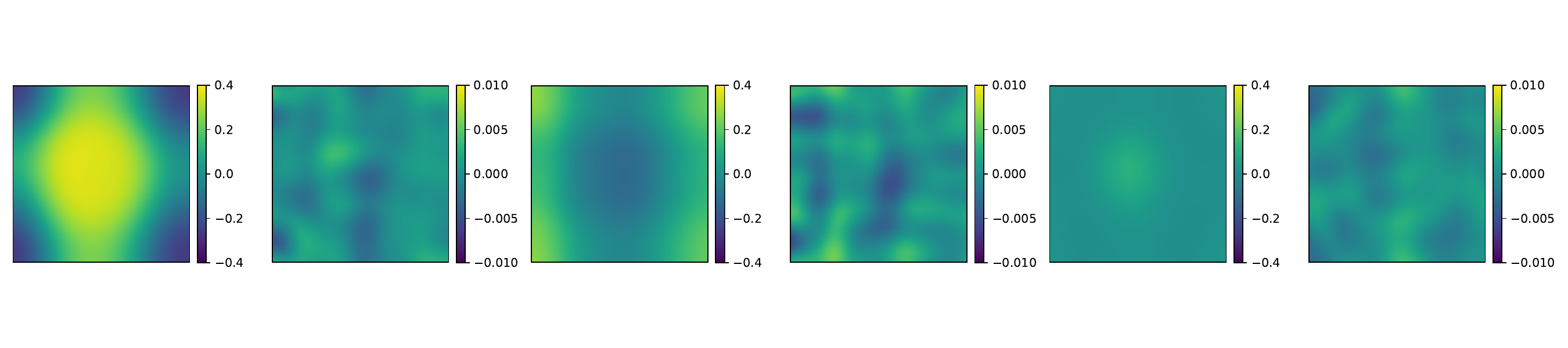}\\
\vspace{-18pt}  
\includegraphics[width=.95\linewidth]{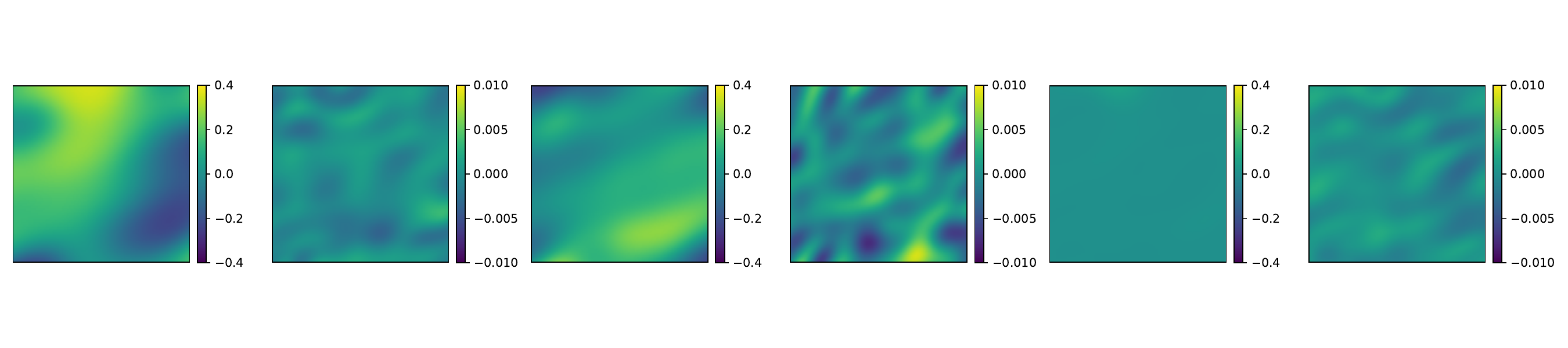}\\
\vspace{-18pt}  
\includegraphics[width=.95\linewidth]{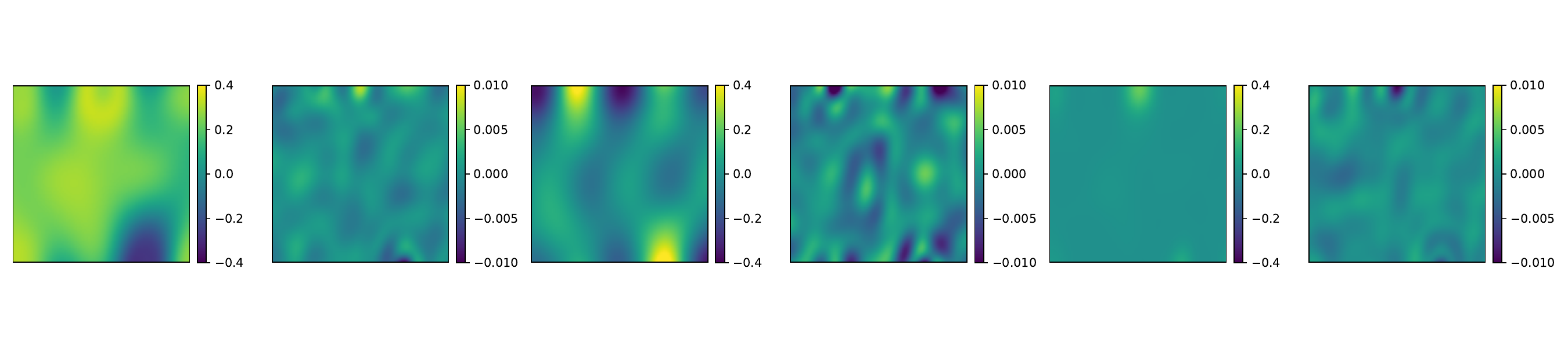}\\
\vspace{20pt}
Approx to $\mathcal{O}_1w$\qquad Error \qquad \quad \ \ Approx to $\mathcal{O}_2w$\qquad Error\qquad \quad \ \ Approx to $\mathcal{O}_3w$\qquad Error\qquad \qquad \quad\\
The rows are projections onto subplane (a)-(e)
\end{tabular}
\caption{DTB approximation to 5-D function generated from three different operators $\{\mathcal{O}_i:i=1, 2, 3\}$. Evaluation is performed on the five two-dimensional hyperplanes (a)-(e). DTB accurately reproduces the structures of $\mathcal{O}_i w$ with pretrained base parameters, yielding small errors.}
\label{fig: func approx 5d}
\end{figure}

Figure~\ref{fig: func approx 5d} displays the cross sections of the DTB approximation of the nonlinear target functions $\mathcal{O}_iw$, along with their corresponding absolute errors. The results demonstrate good accuracy, with the absolute error consistently on the order of \(10^{-3}\) (smaller than $0.01$) in the domain. This accuracy is particularly noteworthy given the combined challenges of high dimensionality and function complexity. With \(20,000\) points sampled in a 5-D hypercube, the data are relatively sparse, and the expected distance between two neighboring points is approximately $0.06$. This indicates that the DTB must accurately capture the complex, nonlinear behavior while generalizing over distances that are of one magnitude larger than the error size. Furthermore, we observe that the results are robust. Computing the model with different random samples and basis initializations yields qualitatively similar performance and error magnitudes, indicating that the approximation is not sensitive to the random choices in the computation. 

\subsection{A numerical experiment demonstrating  Proposition \ref{prop: diff between param and func update}} \label{sec: num verify taylor error}
Proposition \ref{prop: diff between param and func update} discusses the difference between the update of the DTB solution and that of the parameter-based solution. This difference remains small only when the time stepsize $h$ is sufficiently small, the update vector \(\alpha\) remains bounded, and the neural network \(f_\theta\) is smooth enough. However, in our experiments, these conditions are often difficult to satisfy simultaneously. In particular, when solving the linear system with high accuracy, the resulting vector \(\alpha\) tends to have a large norm. We show some experiments to illustrate these points.

We follow the same experimental setup as in the previous section, except that we set the operator $\mathcal{O}$ to be:
\begin{align}
    &\mathcal{O} w = 0.01*\Delta w + w - w^3,
\end{align}
After pre-training $f_{\theta}$ by minimizing $\lVert f_\theta - w\rVert_{L^2}$, we vary the hyperparameter \textbf{rcond} in the JAX linear system solver from $10^{-3}$ to $10^{-7}$, and solve for $\alpha$ using algorithm \ref{alg: J-DTB}. We would like to mention that rcond corresponds to the tolerance used as the stopping criterion in the linear solver. Please see the documentation \cite{jax2018github} of \textbf{jax.numpy.linalg.lstsq} function for the details.   

We record the following metrics in table \ref{tab: approx err from taylor}.
\begin{itemize}
    \item $l^2$ norm of solved $\alpha $ as vector in $\mathbf{R}^{6000}$.
    \item the relative error from linear system solver, defined as:
    \begin{align}
        \epsilon^{\textrm{rel\_LS}} = \frac{1}{\|\mathcal{O}w\|_{L^2}}\|\frac{\partial}{\partial\theta}f_{\theta}(z) \cdot \alpha - \mathcal{O}w(z)\|_{L^2}=\frac{1}{\|\mathcal{O}w\|_{L^2}}\|(\mathcal{K}_{\theta}-I) [\mathcal{O}w]\|_{L^2}.
    \end{align}
    Notice that the relative approximation error from DTB solution is identical to $\epsilon^{\textrm{rel\_LS}}$ by definition \eqref{eq: func opt approx}.
    \item the relative error $\epsilon^{\textrm{rel\_T}}$, which measures the deviation of the parameter-based function difference $\frac{1}{h}(f_{\theta +h\alpha}(z)-f_{\theta})$ towards the target function $\mathcal{O}w(z)$:
    \begin{align}\label{eq: relative epsilon taylor}
        \epsilon ^{\textrm{rel\_T}}=\frac{1}{\|\mathcal{O}w\|_{L^2}}\|\frac{1}{h}\left[f_{\theta +h\alpha}(z)-f_{\theta}\right] - \mathcal{O}w(z)\|_{L^2}.
    \end{align}
\end{itemize}

\begin{table}[ht]
\centering
\begin{tabular}{|c|c|c|c|c|c|c|c|c|}
\hline
\multirow{2}{*}{rcond} & \multirow{2}{*}{$\epsilon^{\textrm{rel\_LS}}$} & \multirow{2}{*}{$|\alpha|_{l^2}$} & \multicolumn{6}{c|}{empirical $\epsilon ^{\textrm{rel\_T}}$ for different step size $h$} \\
\cline{4-9}
 & & & 0.001 & 0.002 & 0.005 & 0.01 & 0.02 & 0.03 \\
\hline
1e-3 & 0.0376 & 15.04 & 0.0384 & 0.0411 & 0.0565 & 0.0930 & 0.1762 & 0.2659 \\
\hline
1e-4 & 0.0127 & 25.81 & 0.0265 & 0.0485 & 0.1198 & 0.2456 & 0.5229 & 0.8405 \\
\hline
1e-05 & 0.0091 & 109.39 & 0.0974 & 0.1951 & 0.4942 & 1.0108 & 2.1096 & 3.2969 \\
\hline
1e-06 & 0.0069 & 602.69 & 5.5993 & 11.1376 & 26.5500 & 43.7735 & 40.9535 & 28.8702 \\
\hline
1e-07 & 0.0060 & 4365.81 & 103.2878 & 212.7789 & 462.6030 & 403.7084 & 682.9411 & 745.1676 \\
\hline
\end{tabular}
\caption{Results of the experiments varying the linear solver's regularization parameter ($\textrm{rcond}$). The table shows the relative solver error $\epsilon^{\mathrm{rel\_LS}}$, the $l^2$ norm of the solution vector $|\alpha|_{l^2}$, and the empirical relative errors $\epsilon^{\mathrm{rel\_T}}$ for different step sizes $h$.}
\label{tab: approx err from taylor}
\end{table}

The results reveal a critical limitation of the LIT approach for such problems. We observe that as the tolerance rcond decreases, the computed $|\alpha|_{l^2}$ increases. The magnitude of $\epsilon^{\mathrm{rel\_T}}$ increases significantly, leading to inaccurate or even incorrect approximations of the solution regardless of the step size $h$. In contrast, the error produced by the proposed DTB method $\epsilon^{\mathrm{rel\_{LS}}}$ decreases as rcond decreases. This shows that the DTB method can maintain high accuracy and stability throughout the simulation. These findings highlight the fundamental differences between LIT-type methods and the DTB framework, especially their capability of handling complex dynamical solutions. The observations confirm our discussion in Remark \ref{rk: extra error in LIT}.

\subsection{Evolutionary PDE}\label{sec:evolution-pdes}
We next demonstrate the performance of the DTB solver on evolutionary PDEs.

\subsubsection{Heat equation}\label{sec: heat eq}
Consider the heat equation
\begin{equation}\label{eq:heat}
\partial_t u(t, z) = 0.1*\Delta u(t, z), \qquad z\in [-1,1]^2,\; t\in[0,4],
\end{equation}
with periodic boundary condition. The initial condition is set to be 
\begin{align}
\label{eq: 2d heat initial func}
    u(t, z)=u_0(z_1, z_2) = \frac{1}{100}\left[\exp\left( 3s_1+s_2\right)+\exp\left(- 3s_1+s_2\right)-\exp\left( 3s_1-s_2\right)-\exp\left( -3s_1-s_2\right)\right],
\end{align}
where $z=(z_1, z_2)$ and $ s_i=\sin z_i, i=1, 2$.

We discretize \eqref{eq:heat} in time by the implicit trapezoidal DTB scheme
(algorithm~\ref{alg: DTB midpoint Euler for heat}) with time step $h=0.01$, the DTB base neural network is a periodic MLP with 7 hidden layers and 40 neurons in each layer,
initialized as in \cite{chen2024teng}, and $6000$ random selected base vectors from $\mathcal{B}(f, \theta)$ are used for calculation. Since the heat equation is linear, we keep the DTB base parameters $\theta^k$ \textbf{fixed} for all steps, that is, $\theta^k=\theta^0, k\geq 1$.
Fig \ref{fig: heat eq 2d} shows that the DTB method achieves high accuracy in solving the 2-D heat equation compared to the reference solution obtained by the Fourier spectral method \cite{chen2024teng}.

\begin{figure}[htbp]
\centering
\begin{tabular}{c}
\vspace{-18pt}  
\includegraphics[width=.95\linewidth]{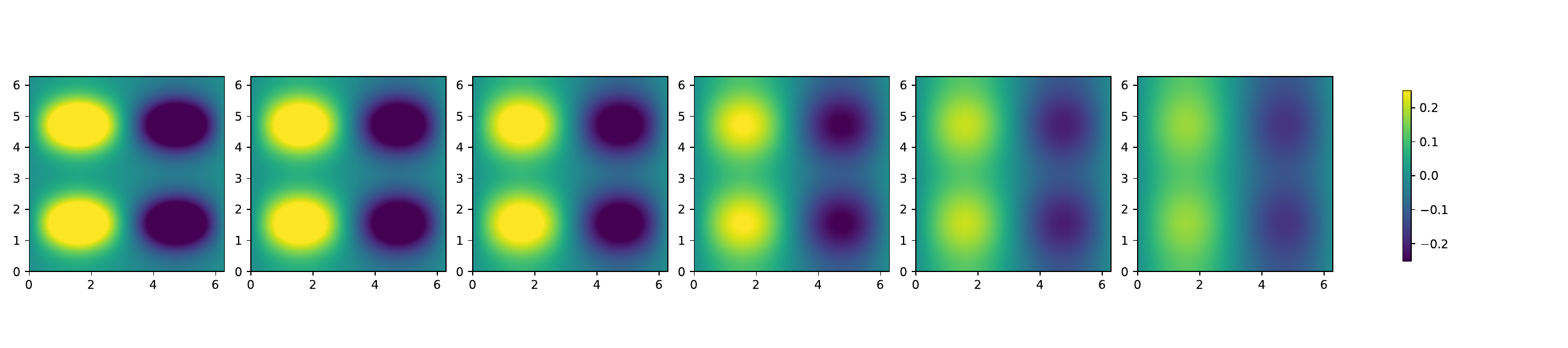}\\
\vspace{-18pt}  
\includegraphics[width=.95\linewidth]{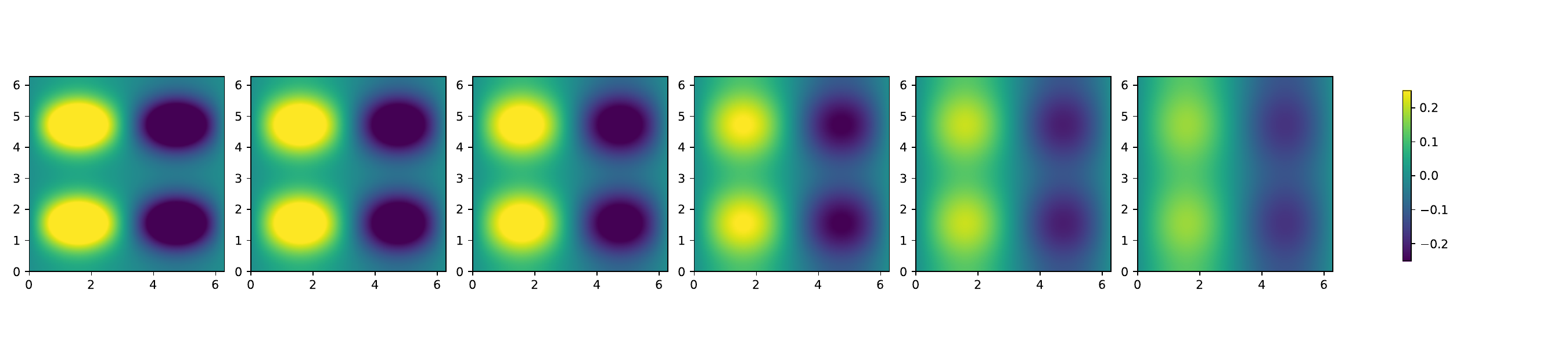} \\
\vspace{-18pt}  
\includegraphics[width=.95\linewidth]{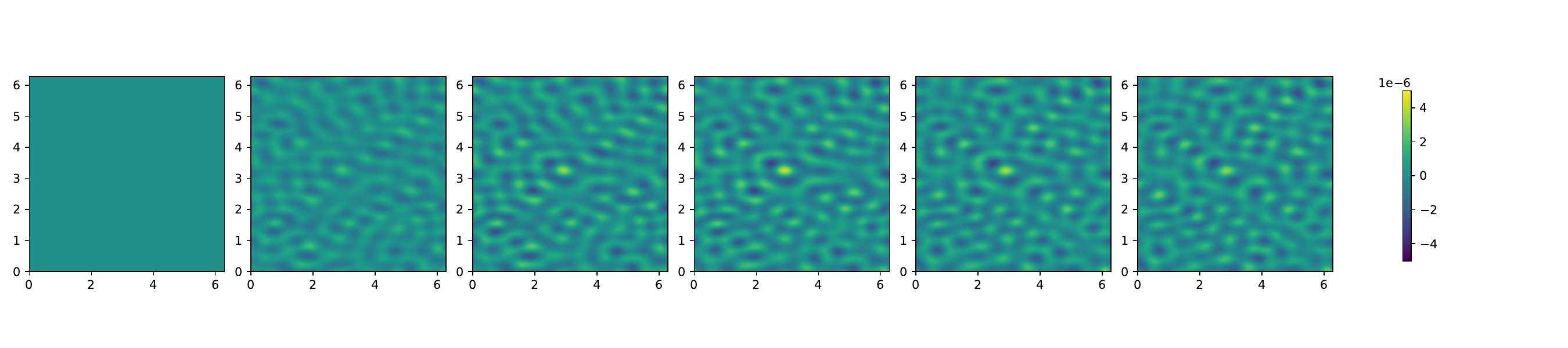}\\
\vspace{20pt}
t=0.\qquad \qquad t=0.5\qquad\qquad t=1.0\qquad\qquad t=2.0\qquad\qquad t=3.0\qquad\qquad t=4.0\qquad \qquad \quad .\\
Upper: DTB solution; middle: reference solution; lower: solution error
\end{tabular}
\caption{Solution of the 2‑D heat equation \eqref{eq: heat} computed with the implicit trapezoidal DTB scheme (Algorithm~\ref{alg: DTB midpoint Euler for heat}). The  reference solution is obtained via a Fourier spectral method as in \cite{chen2024teng}. The DTB solution closely matches the spectral reference, exhibiting smooth diffusion and small errors across all times.}
\label{fig: heat eq 2d}
\end{figure}

\begin{remark}
The spatial approximation error with the pre-trained parameters is
$\mathcal O(10^{-6})$, whereas the local time discretization error of the first-order scheme is $\mathcal O(h)\approx 10^{-2}$. Compared to the forward Euler-based DTB solver as in Fig \ref{fig: heat forward euler 2d}, the implicit trapezoidal DTB method, which is second order in time, reduces the error substantially.
\end{remark}

\begin{figure}[htbp]
\centering
\begin{tabular}{c}
\vspace{-18pt}  
\includegraphics[width=.95\linewidth]{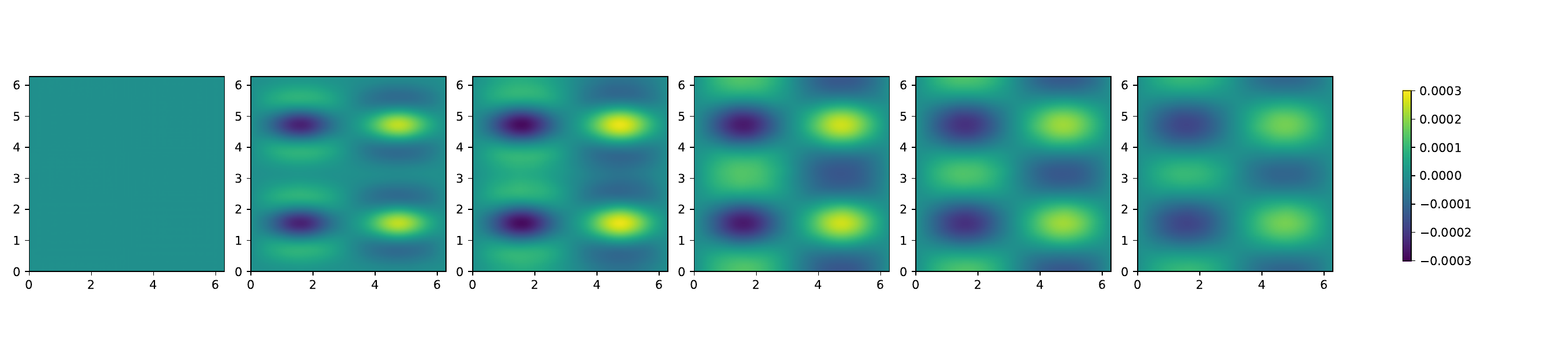}\\
\vspace{20pt}
t=0.\qquad \qquad t=0.5\qquad\qquad t=1.0\qquad\qquad t=2.0\qquad\qquad t=3.0\qquad\qquad t=4.0\qquad \qquad \quad .\\
\end{tabular}
\caption{2‑D heat equation \eqref{eq: heat} solved with the forward‑Euler DTB scheme Algorithm~\ref{alg: forward euler DTB}. The setup matches Figure \ref{fig: heat eq 2d} except the time integrator. Compared with the implicit trapezoidal scheme in Figure \ref{fig: heat eq 2d}, the forward‑Euler DTB exhibits noticeably larger errors.}
\label{fig: heat forward euler 2d}
\end{figure}

\subsubsection{Allen Cahn equation} We consider the Allen-Cahn equation in two and five spatial dimensions, respectively, that is 
\begin{align}\label{eq: AC}
    \partial_t u(t, z)=\nu\Delta u(t, z) + u(t, z) -u(t, z)^3,\quad z\in [-1, 1]^d,\ t\in[0, T],
\end{align}
with periodic boundary condition and $d=2, 5$.
\subsubsection{2-D case} We take $\nu=0.005,\ T=4$ and consider the same initial condition \eqref{eq: 2d heat initial func} as in section \ref{sec: heat eq}.
    We discretize \eqref{eq: AC} in time with time step $h = 0.01$. 
    The DTB architecture and initialization are identical to that used for the heat
equation. Since \eqref{eq: AC} is a nonlinear PDE, we \textbf{update} the DTB parameters $\theta$ at each time step, with the following update rule:
\begin{align}\label{eq: forward theta update}
    \theta^{k+1}=\theta^k + h\alpha^k.
\end{align}
We applied an extra correction step when solving for $\alpha^k$ by the Runge-Kutta scheme; see Appendix \ref{sec: ac 2d alg} for the details of the algorithm design.
We compare the numerical results with those obtained by the spectrum method in Fig. \ref{fig: AC sol 2d}. The error is uniformly small throughout the simulation.
\begin{figure}[htbp]
\centering
\begin{tabular}{c}
\vspace{-18pt}  
\includegraphics[width=.95\linewidth]{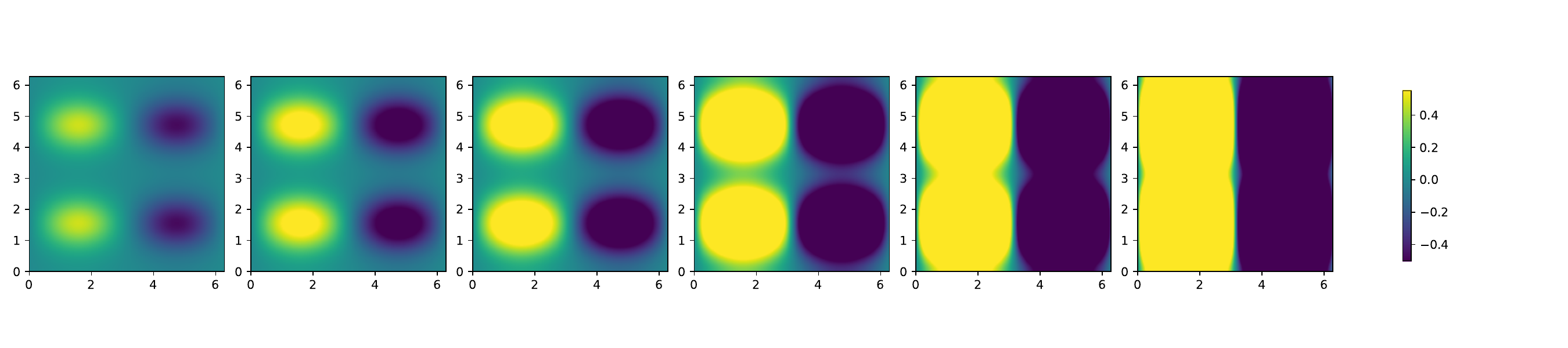}\\
\vspace{-18pt}  
\includegraphics[width=.95\linewidth]{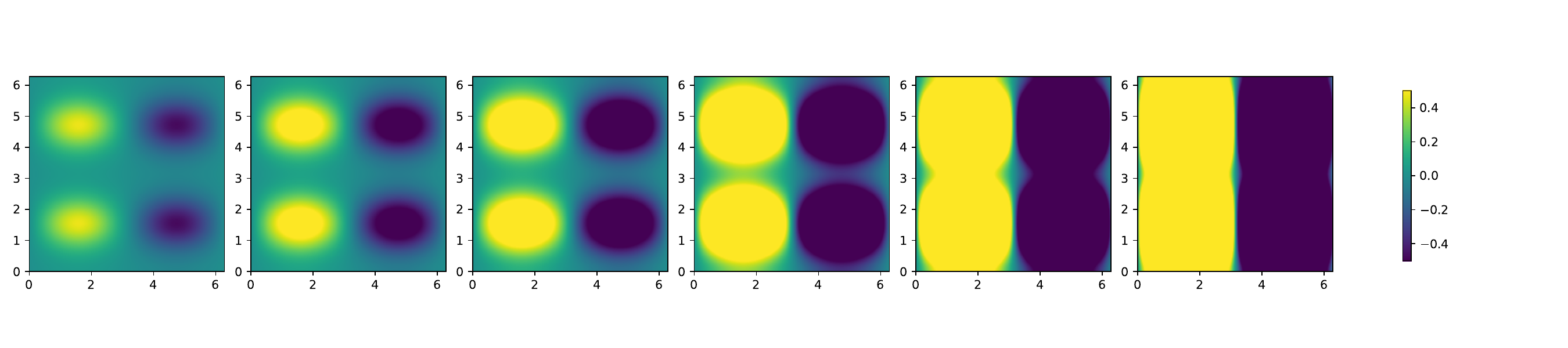} \\
\vspace{-18pt}  
\includegraphics[width=.95\linewidth]{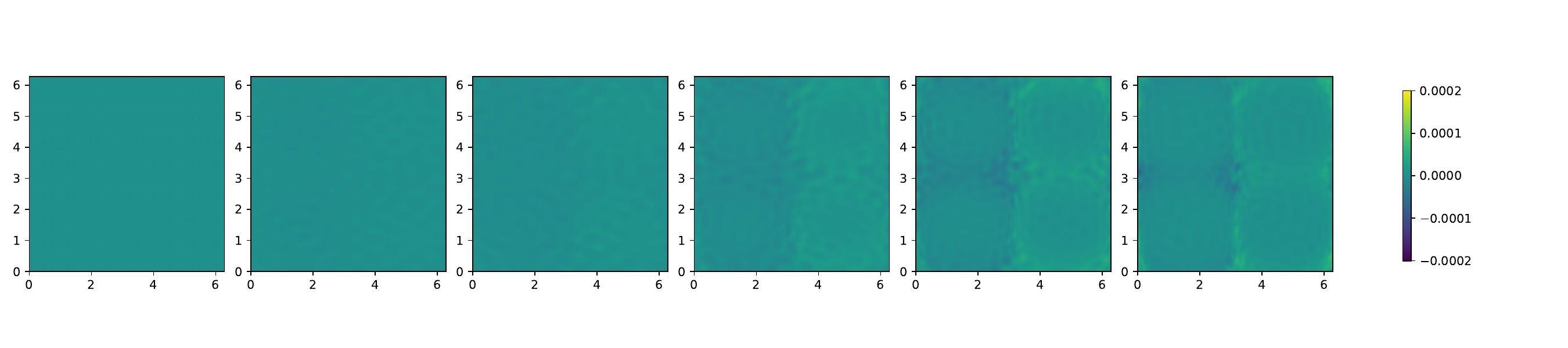}\\
\vspace{20pt}
t=0.\qquad \qquad t=0.5\qquad\qquad t=1.0\qquad\qquad t=2.0\qquad\qquad t=3.0\qquad\qquad t=4.0\qquad \qquad \quad .\\
Upper: DTB solution; middle: reference solution; lower: solution error
\end{tabular}
\caption{2‑D Allen–Cahn equation solved with DTB Algorithm~\ref{alg: DTB for 2d ac}. The reference solution is obtained via a Fourier spectral method as in \cite{chen2024teng}. The DTB solution cleanly captures the dynamics, with uniformly small errors throughout the simulation.}
\label{fig: AC sol 2d}
\end{figure}

    \subsubsection{5-D case} We set $\nu=0.01$ and the initial condition to be the function defined in \eqref{eq: AC u0}. Notice that $w(x)\in [-1, 1] $ for any $x\in [-1, 1]^5$.
    
    Directly updating the network parameters $\theta$ at every time step as in~\eqref{eq: forward theta update} may become impractical in high dimensions or for strongly nonlinear right-hand sides. In these regimes, the DTB coefficients $\alpha^k$ obtained from the linear system \eqref{eq: G alpha sol} or \eqref{eq: J alpha solution} often exhibit large $\ell^2$ or $\ell^\infty$ norms, which in turn makes the parameter dynamics stiff. To preserve stability, we need to reduce the time step $h$ to an intractable small value, severely slowing the simulation. This phenomenon is demonstrated in the table presented in the numerical example in Section \ref{sec: num verify taylor error}.

    To circumvent the above stiffness we apply the DTB method Algorithm~\ref{alg: forward euler DTB} with the update rule given in Proposition \ref{prop: theta update opt} 
    We take $L=20$, that is we reinitialize the parameters $\theta$ every $20$ time step.   
    The DTB architecture is the same as that given in Section \ref{sec: num func approx}. We evaluate the solution in the five two-dimensional hyperplanes defined in Section~\ref{sec: num func approx} and display the snapshots in Fig.~\ref{fig: AC sol 5d}. 

    The experiments demonstrate that the DTB scheme yields stable and accurate parameter dynamics in regimes prone to stiffness, enabling practical time step size without excessive refinement. Across the five two‑dimensional evaluation planes, the snapshots in Fig. \ref{fig: AC sol 5d} show consistent solution quality and smooth evolution, indicating that the architecture from Section \ref{sec: num func approx} generalizes well to the higher‑dimensional setting. The observed behavior aligns with the anticipated impact of large DTB coefficient norms, but the DTB solver maintained robustness and accuracy throughout the computation. In general, these results support the DTB as a scalable approach for high‑dimensional nonlinear problems.

\begin{figure}[htbp]
\centering
\begin{tabular}{c}
\vspace{-18pt}  
\includegraphics[width=.95\linewidth]{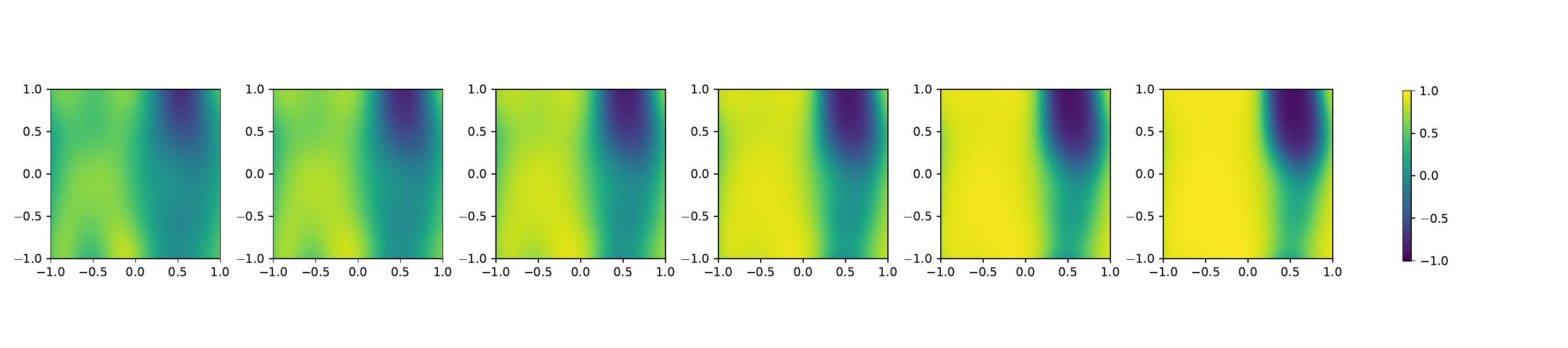}\\
\vspace{-18pt}  
\includegraphics[width=.95\linewidth]{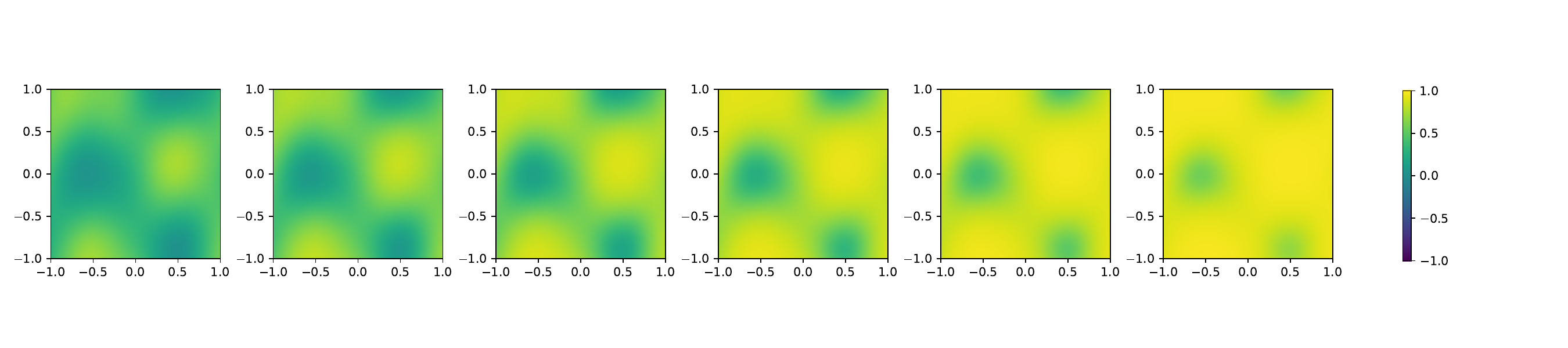} \\
\vspace{-18pt}  
\includegraphics[width=.95\linewidth]{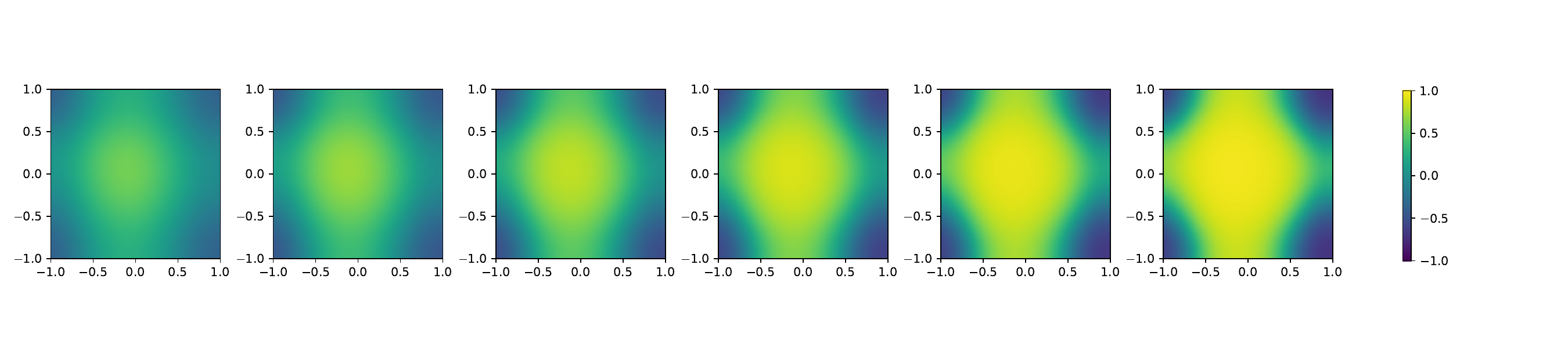}\\
\vspace{-18pt}  
\includegraphics[width=.95\linewidth]{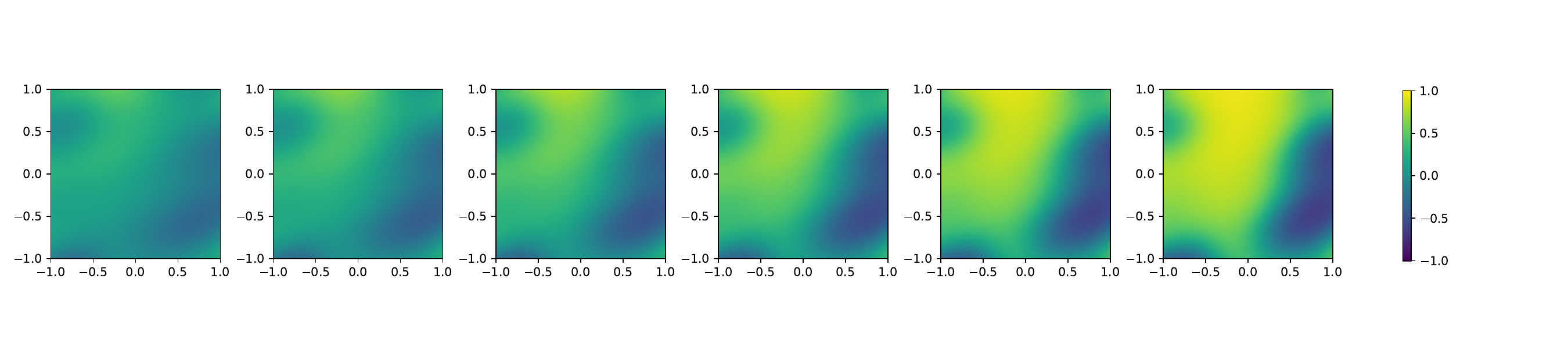}\\
\vspace{-18pt}  
\includegraphics[width=.95\linewidth]{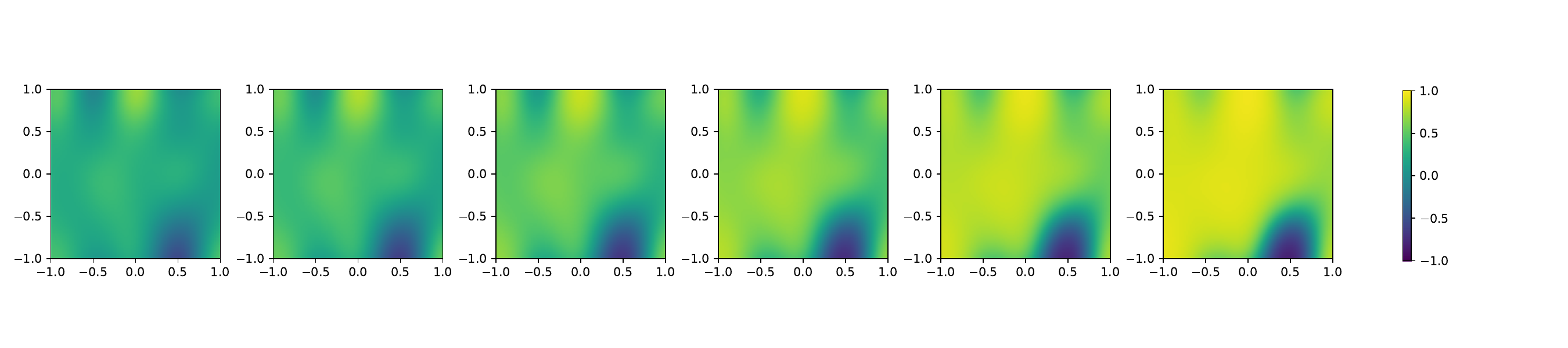}\\
\vspace{20pt}
t=0.\qquad \qquad t=0.4\qquad\qquad t=0.8\qquad\qquad t=1.2\qquad\qquad t=1.6\qquad\qquad t=2.0\qquad \qquad \quad .\\
The rows are solution projections onto subplane (a)-(e) 
\end{tabular}
\caption{5‑D Allen–Cahn equation solved with DTB Algorithm~\ref{alg: forward euler DTB}, where the parameters is updated as in Proposition~\ref{prop: theta update opt}. Evaluation is performed on the five two-dimensional hyperplanes (a)-(e). The solution shows the expected phase separation and concentration behavior of Allen–Cahn dynamics in high dimension.}
\label{fig: AC sol 5d}
\end{figure}

\subsection{Wasserstein flows}\label{sec: wf numerical} Here, we present the numerical results of the WGF and WHF as discussed in section \ref{sec: WF intro}.

\subsubsection{WGF with interaction potential.}\label{sec: num wgf}
We first consider solving the WGF with interaction potential \eqref{eq: interact-F} where:
\begin{align}
    \label{eq: wgf agg kernel}
    J(z)=\frac{|z|^4}{4}-\frac{|z|^2}{2},
\end{align}
and the initial density is set to be Gaussian distribution with mean $\mu_0=(1.25, 1.25)^{\top}$ and variance $\gamma I$ where $\gamma=0.6$:
\begin{equation}\label{eq: wgf agg initial density}
    \rho_0(z)=\frac{1}{\sqrt{2\pi}\gamma }e^{-\frac{|z-\mu_0|^2}{2\gamma^2}}.
\end{equation}
Given the initial condition, the solution \(\rho_t\) converges to the steady-state solution \(\rho_*\), which is a Dirac distribution uniformly concentrated on a ring with radius \(0.5\) centered at \(\mu_0\). We apply the DTB method to approximate the evolution equation \eqref{def: operator dyn wgf} of the push-forward map $T$ with DNN $T_{\theta}$. The same neural network architecture (residual neural networks with 2 hidden layers and 50 neurons in each hidden layer) as in \cite{JIN2025113660} is used for both the DTB and PWGF methods. Numerical results are shown in Figure~\ref{aggregation sampleplot}. The first row corresponds to the PWGF method, while the second row corresponds to the DTB method.

When approaching the steady-state map as \(t \to \infty\), the true push-forward map should transform the initial Gaussian distribution (supported on a single connected domain, i.e., \(\mathbb{R}^2\)) into a Dirac distribution concentrated on the ring (a domain with a hole). However, such a transformation is not possible with a continuous function due to the topological structure.

As \(t\) increases, the PWGF struggles to learn this push-forward map, and ultimately the solution converges to a polygon-shaped region instead of the desired ring. In contrast, the DTB method learns a much smoother and more accurate circular structure, closely resembling the true steady-state solution.

\begin{figure*}[t!]
    \begin{subfigure}{0.16\textwidth}
        \centering
        \includegraphics[width=0.99\linewidth]{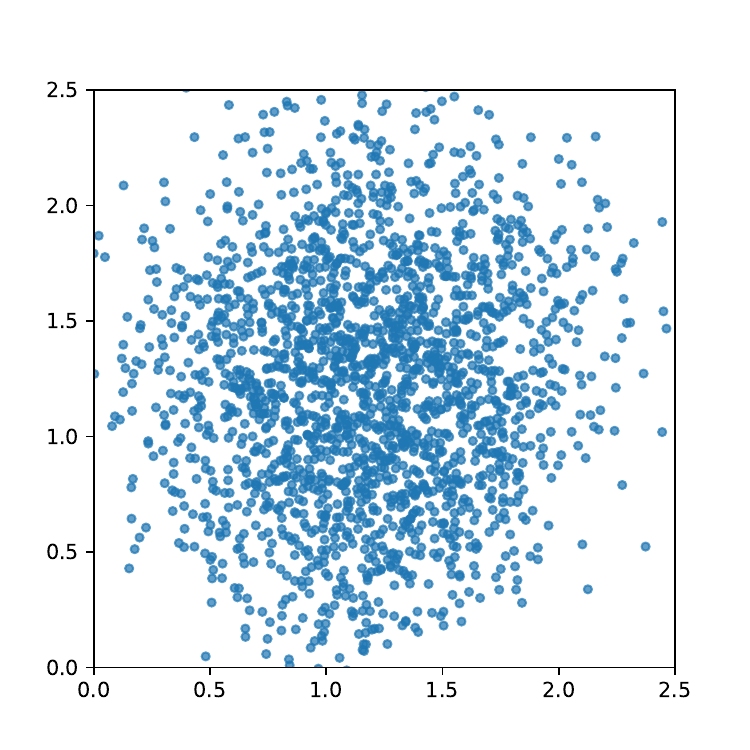}
        \caption{$t=0$}
    \end{subfigure}%
    \begin{subfigure}{0.16\textwidth}
        \centering
        \includegraphics[width=0.99\linewidth]{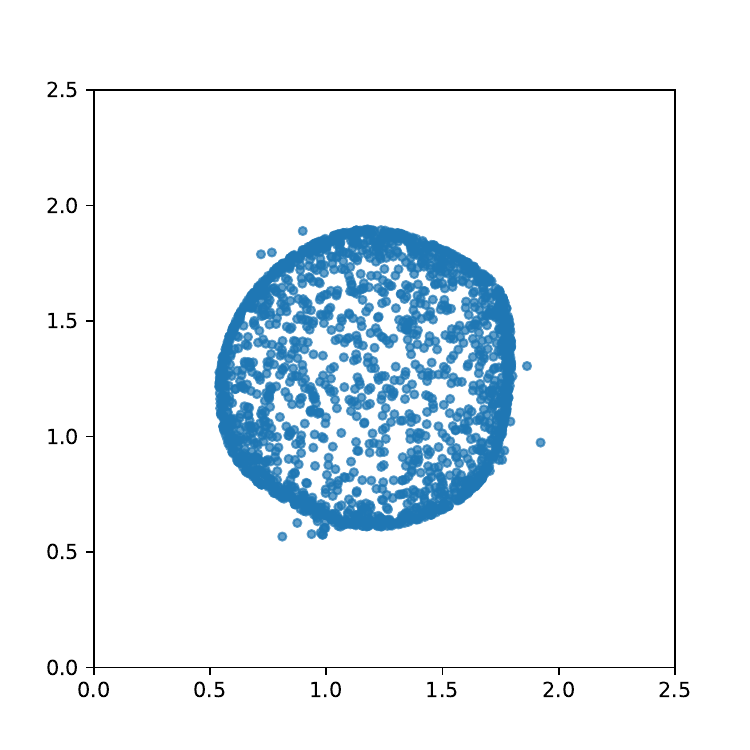}
        \caption{$t=3$}
    \end{subfigure}
    \begin{subfigure}{0.16\textwidth}
        \centering
        \includegraphics[width=0.99\linewidth]{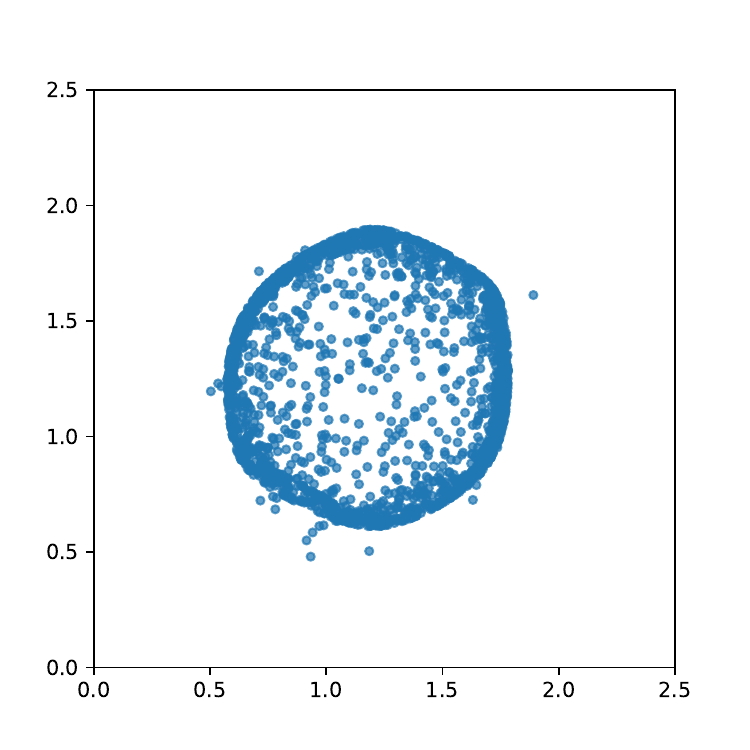}
        \caption{$t=6$}
    \end{subfigure}
    \begin{subfigure}{0.16\textwidth}
        \centering
        \includegraphics[width=0.99\linewidth]{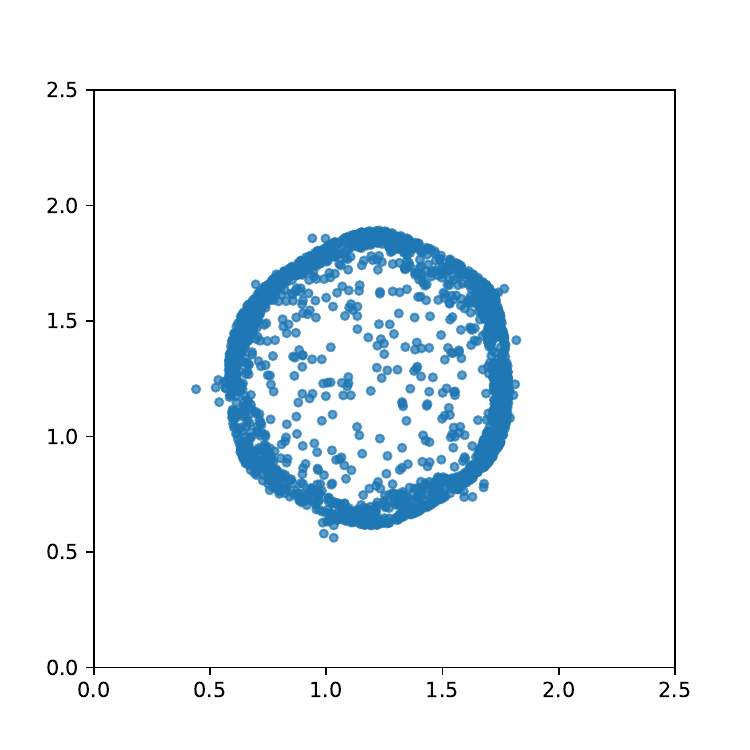}
        \caption{$t=9$}
    \end{subfigure}%
    \begin{subfigure}{0.16\textwidth}
        \centering
        \includegraphics[width=0.99\linewidth]{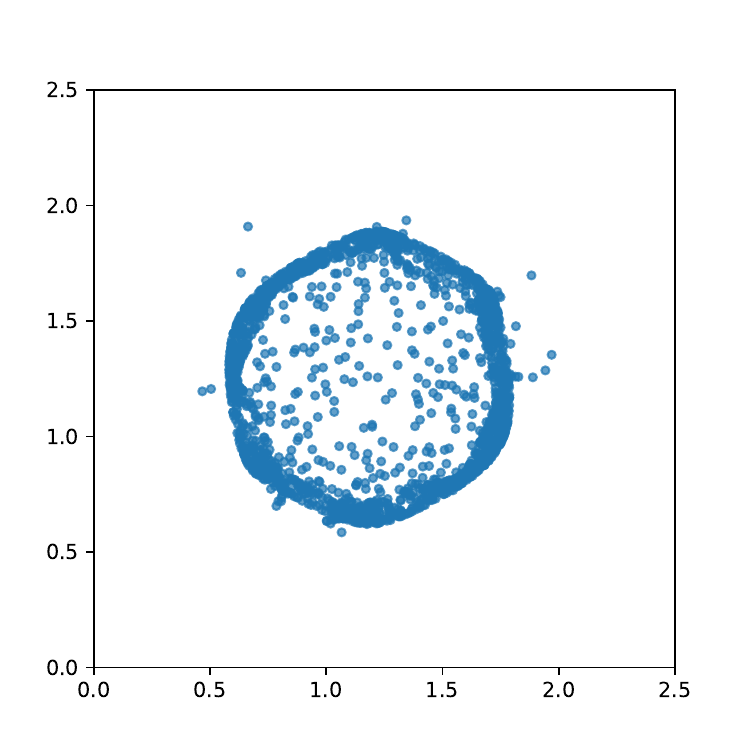}
        \caption{$t=12$}
    \end{subfigure}
    \begin{subfigure}{0.16\textwidth}
        \centering
        \includegraphics[width=0.99\linewidth]{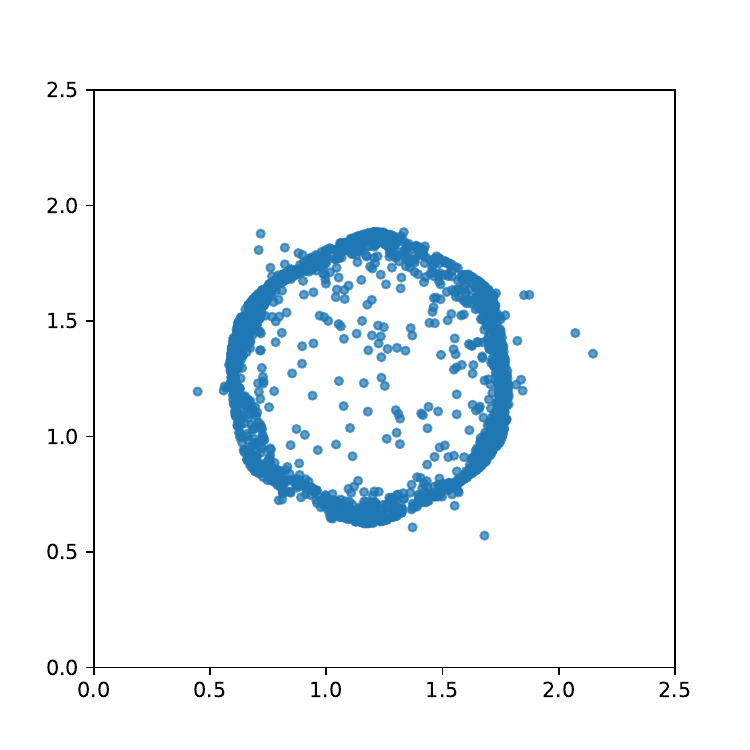}
        \caption{$t=15$}
    \end{subfigure}
        \begin{subfigure}{0.16\textwidth}
        \centering
        \includegraphics[width=0.99\linewidth]{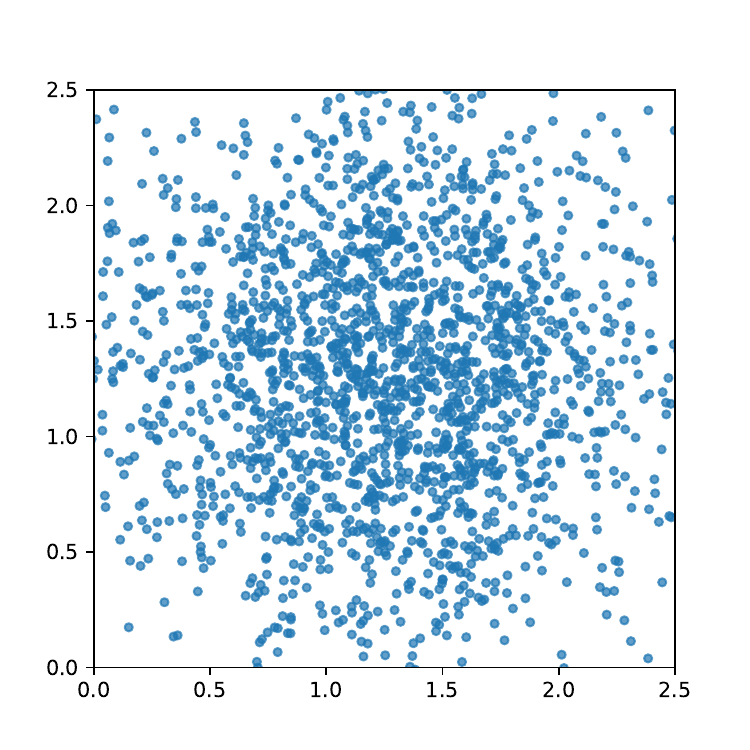}
        \caption{$t=0$}
    \end{subfigure}%
    \begin{subfigure}{0.16\textwidth}
        \centering
        \includegraphics[width=0.99\linewidth]{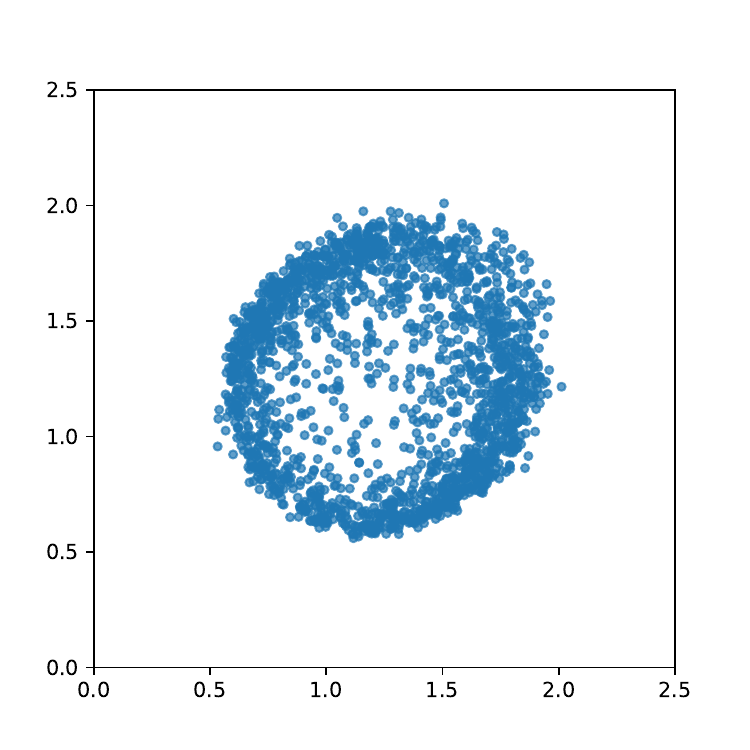}
        \caption{$t=3$}
    \end{subfigure}
    \begin{subfigure}{0.16\textwidth}
        \centering
        \includegraphics[width=0.99\linewidth]{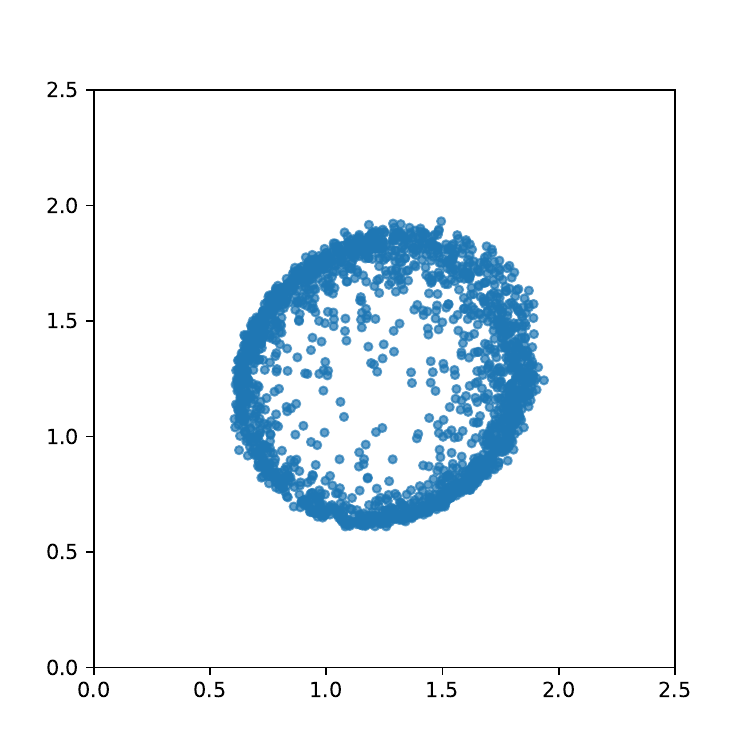}
        \caption{$t=6$}
    \end{subfigure}
    \begin{subfigure}{0.16\textwidth}
        \centering
        \includegraphics[width=0.99\linewidth]{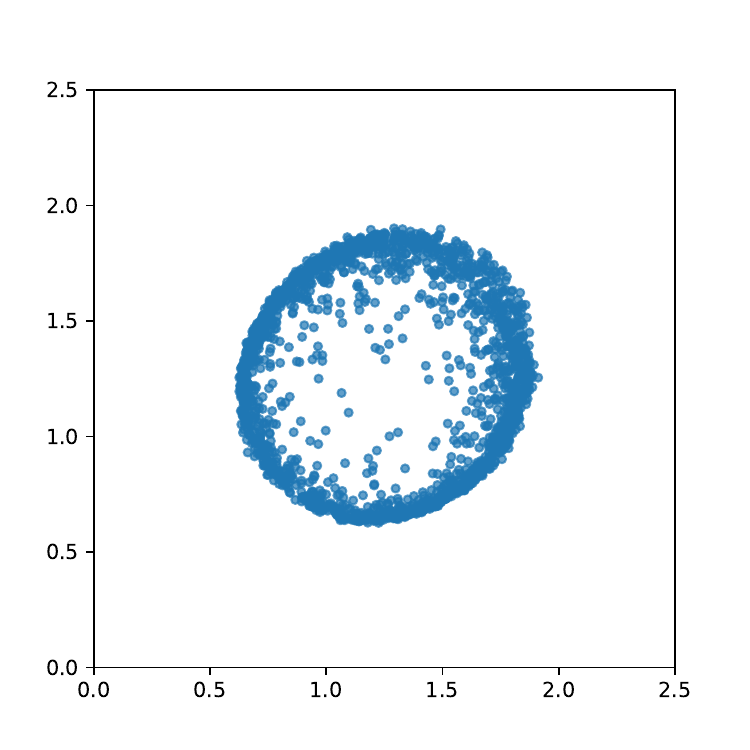}
        \caption{$t=9$}
    \end{subfigure}%
    \begin{subfigure}{0.16\textwidth}
        \centering
        \includegraphics[width=0.99\linewidth]{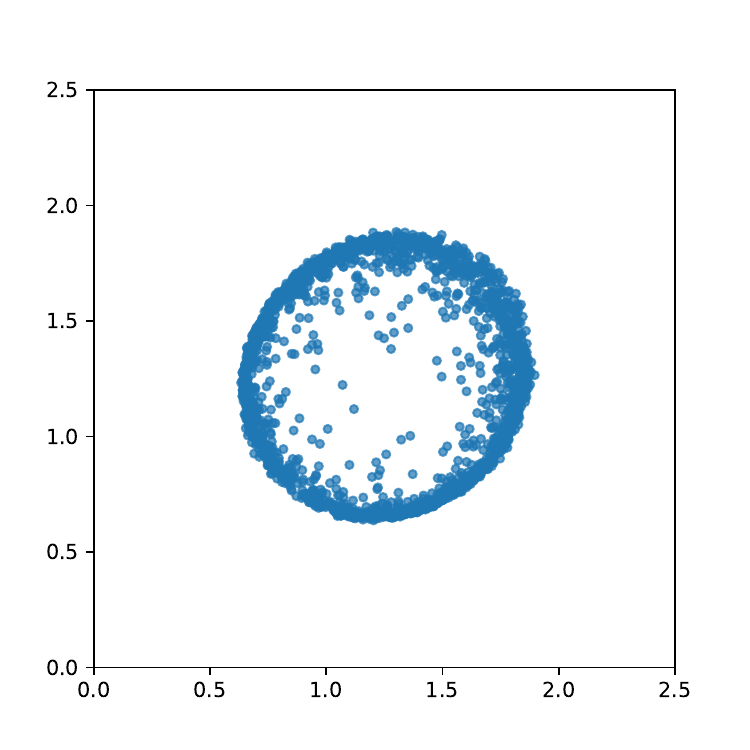}
        \caption{$t=12$}
    \end{subfigure}
    \begin{subfigure}{0.16\textwidth}
        \centering
        \includegraphics[width=0.99\linewidth]{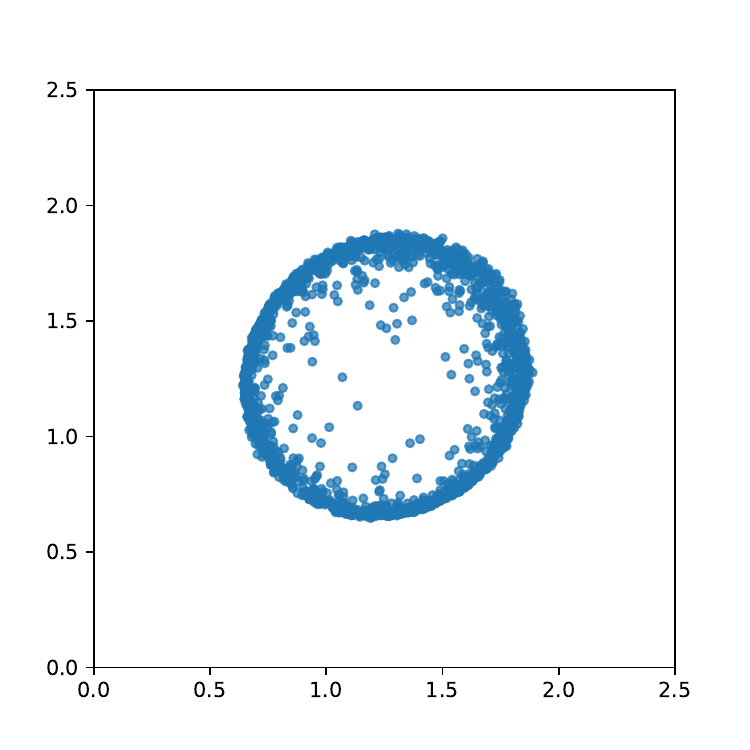}
        \caption{$t=15$}
    \end{subfigure}
    \caption{Sample plots of computed $\rho_{\theta}$ at different time $t$ for WGF with the interaction potential. Top row: PWGF solution; Bottom row: DTB solution}
    \label{aggregation sampleplot}
\end{figure*}

\subsubsection{Harmonic Oscillator as WHF}\label{sec: num whf ho}
Another example is the $10D$ WHF with quadratic potential \eqref{eq: whf linear op} where 
\begin{align}
    &V(z) = {\frac{3}{8}z_1^2+\frac{1}{2}\sum_{i=2}^{10}z_i^2 },\\
    &\rho(0, z) = \frac{1}{2}\sum_{i=2}^{10}z_i^2.
\end{align}
In this case, the trajectory of any randomly chosen initial point can be explicitly solved \cite{doi:10.1137/23M159281X}, and the solution corresponds to a harmonic oscillator.

We apply both the PWHF and DTB methods using the same neural network structure (residual neural networks with 2 hidden layers and 80 neurons in each hidden layer) and the same time discretization step size. For the DTB method, we use Algorithm \ref{alg: G-DTB} to approximate the right-hand side of \eqref{eq: whf linear op} through a DNN. The second-order dynamics are rewritten as a system of first-order equations, which are then integrated using the forward Euler scheme.
Figure \ref{fig: L2 HO} shows the relative $L^2$ error for the calculated solutions, and figure \ref{fig: HO trajectory sampleplot} shows the trajectories of random picked initial points under the push-forward map evolution. The DTB method provides better approximations to the trajectories.
\begin{figure*}
    \begin{subfigure}{0.3\textwidth}
        \centering
        \includegraphics[width=0.99\linewidth]{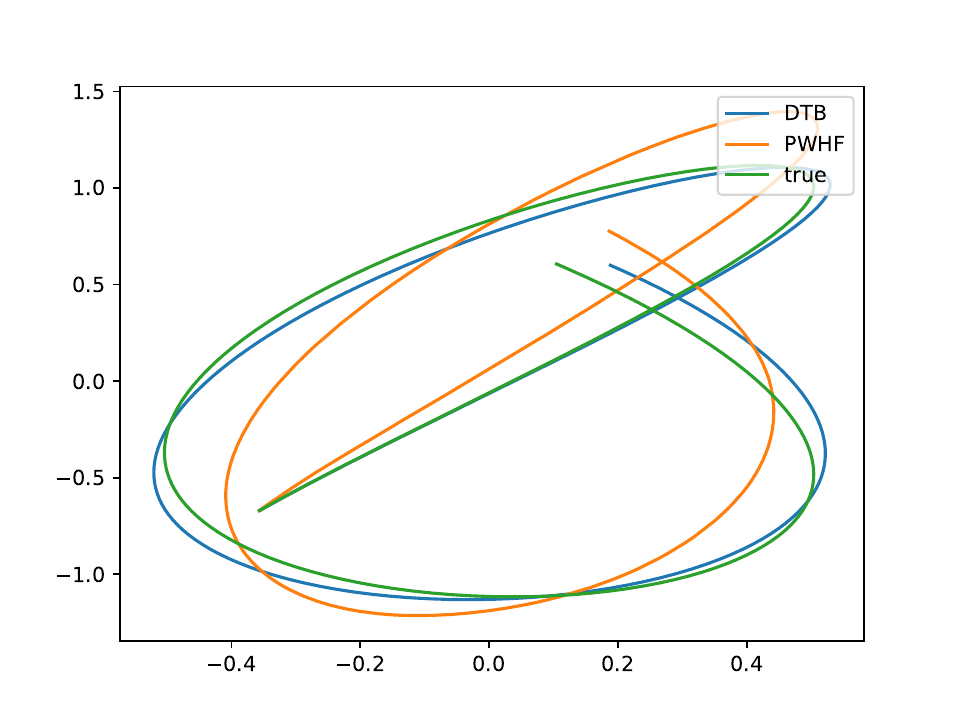}
        \caption{sample $1$}
    \end{subfigure}%
    \begin{subfigure}{0.3\textwidth}
        \centering
        \includegraphics[width=0.99\linewidth]{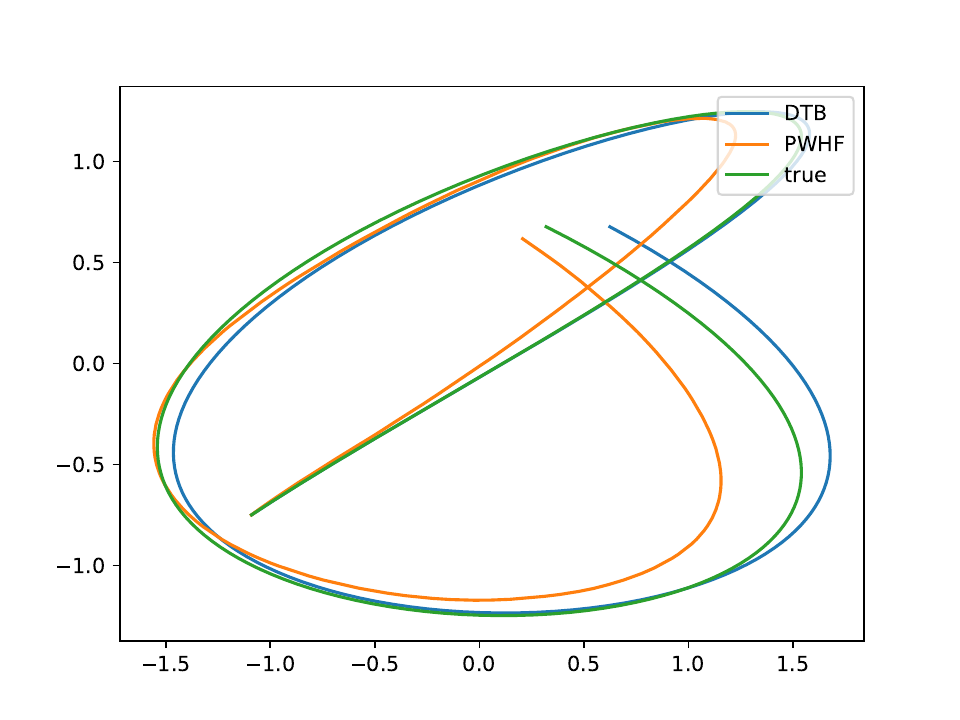}
        \caption{sample $2$}
    \end{subfigure}
    \begin{subfigure}{0.3\textwidth}
        \centering
        \includegraphics[width=0.99\linewidth]{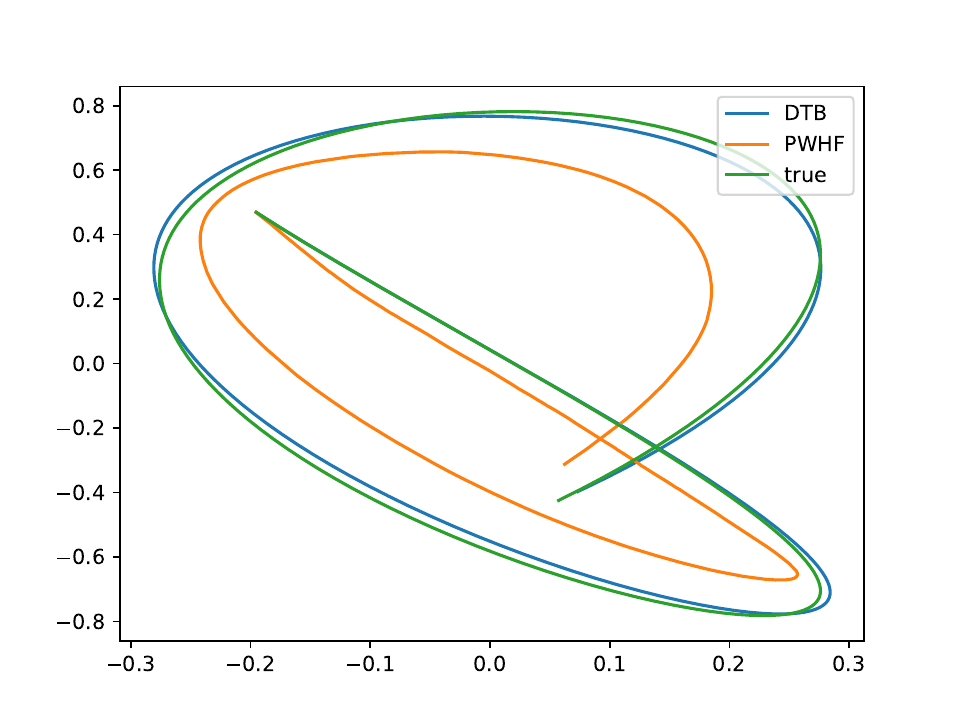}
        \caption{sample $3$}
    \end{subfigure}
    \\
    \begin{subfigure}{0.3\textwidth}
        \centering
        \includegraphics[width=0.99\linewidth]{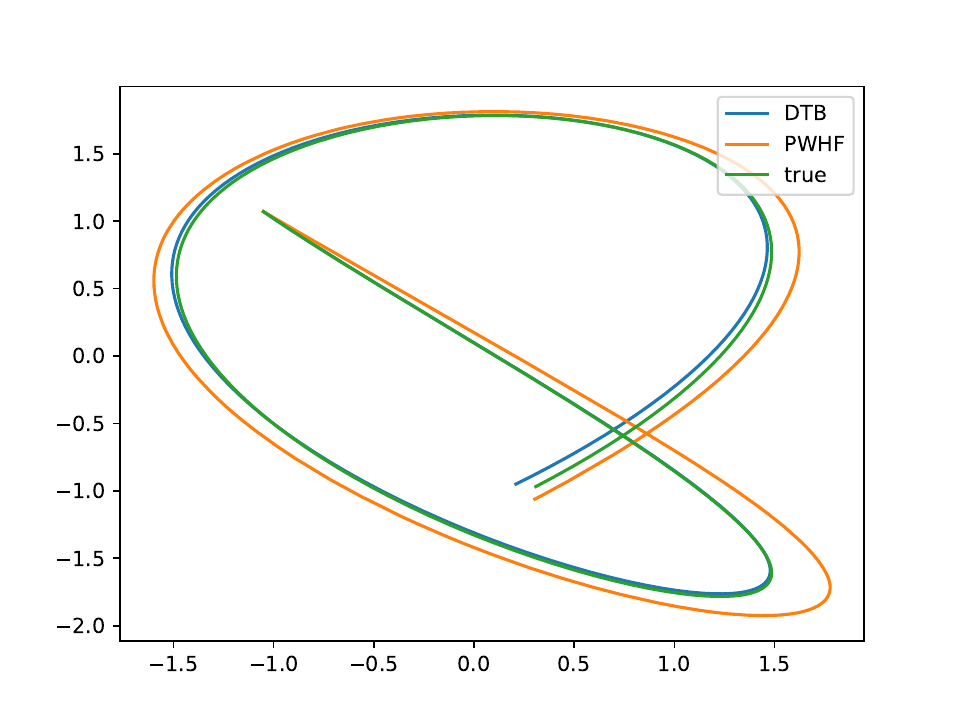}
        \caption{sample $4$}
    \end{subfigure}%
    \begin{subfigure}{0.3\textwidth}
        \centering
        \includegraphics[width=0.99\linewidth]{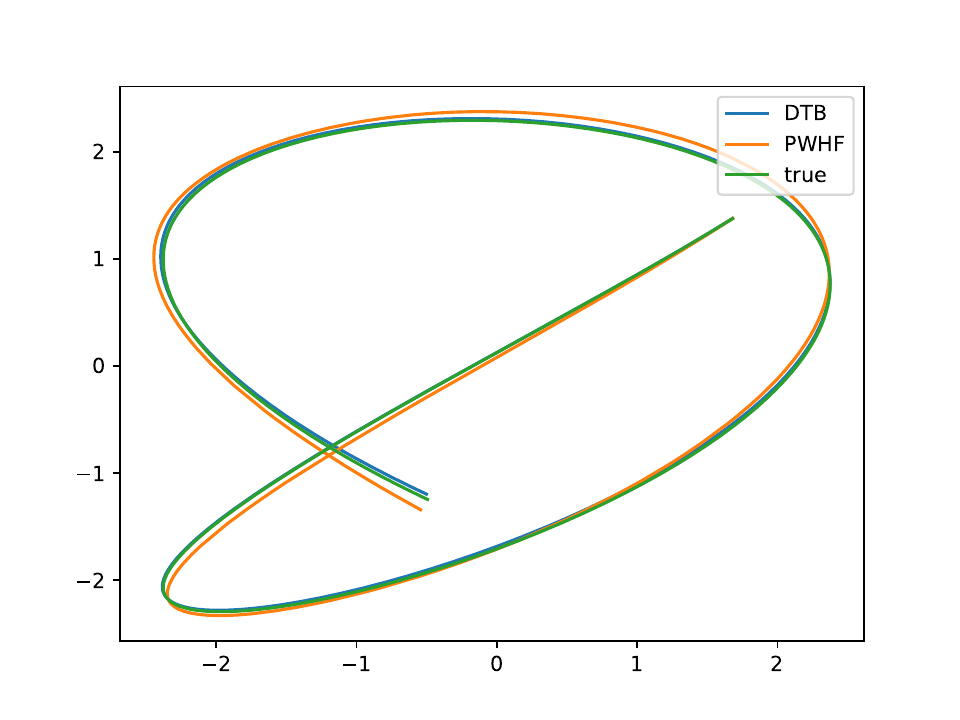}
        \caption{sample $5$}
    \end{subfigure}
    \begin{subfigure}{0.3\textwidth}
        \centering
        \includegraphics[width=0.99\linewidth]{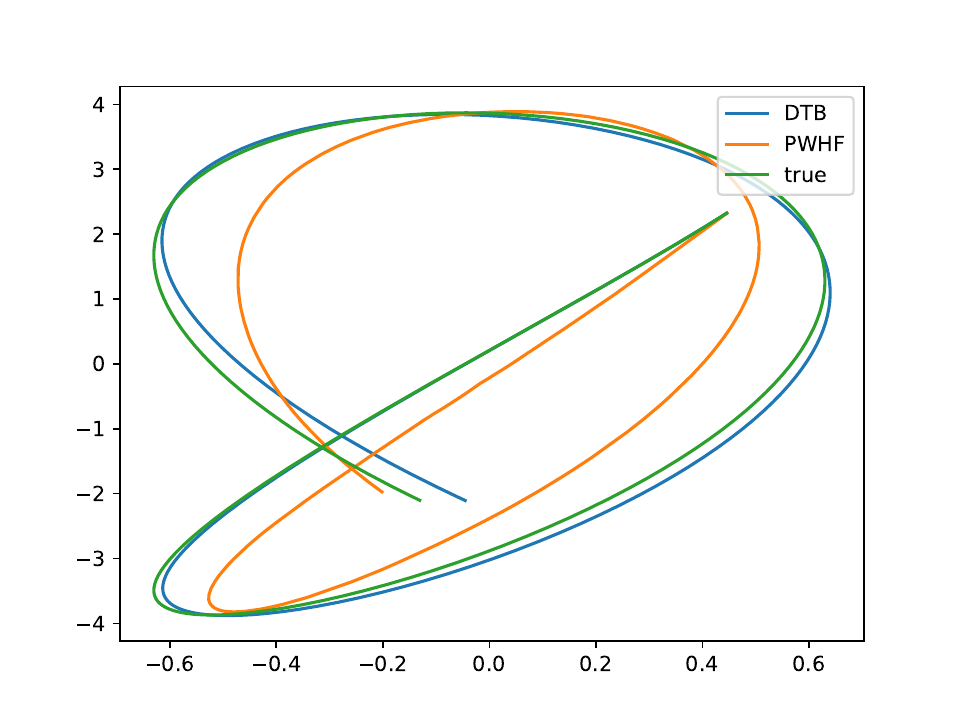}
        \caption{sample $6$}
    \end{subfigure}
    
    \caption{Trajectory plots of random picked initial points for the harmonic oscillator example.}
    \label{fig: HO trajectory sampleplot}
\end{figure*}

\begin{figure}[H]
    \centering
    \includegraphics[width=0.5\linewidth]{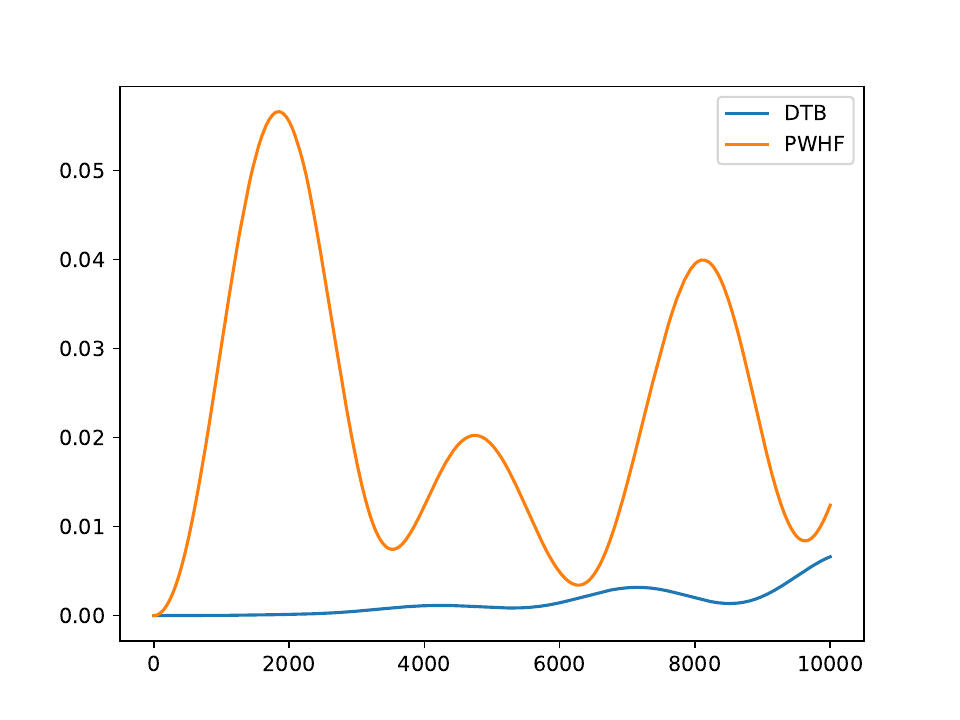}
    \caption{relative $L^2$ error versus time for the harmonic oscillator example. Under the same settings, the DTB attains consistently lower relative errors than the PWHF.}
    \label{fig: L2 HO}
\end{figure}

\section{Conclusion}

We introduce a meshless and training-free method that leverages the traditional temporal schemes and the tangent bundle of DNN to compute the solution of evolutionary PDEs in high dimension. Compared to some of the existing LIT methods, the DTB method exhibits noticeable improvements in accuracy with enhanced stability and robustness. Furthermore, it offers convenient adaptivity to combine multiple DNNs and theoretically provides error control. Meanwhile, several problems remain unanswered. For example, are there more systematic strategies or guidelines in updating neural network parameters? Our current recommendations, as discussed in subsection on adapting DNN parameters, are suggested heuristically. A principled method is more desirable in practice. How does the solution error quantitatively depend on the structure of DNN, the sample size,  and the subspace selection in the DTB bases? Can we extend the DTB method to solve elliptic PDEs? Those are questions worthy of further investigation.

\section{Acknowledgments} This research is partially supported by NSF grants DMS-2307465 and DMS-2510829.



\bibliographystyle{siamplain}
\bibliography{references}

\appendix

\section{Algorithm design for 2-D Allen-Cahn equation}\label{sec: ac 2d alg}
To enhance the accuracy when solving the 2-D Allen-Cahn equation, we modify Algorithm \ref{alg: forward euler DTB} by incorporating an additional correction step. The forward update rule for DTB base selection is also employed, leading to the following algorithm.

\begin{algorithm}[H]
\caption{DTB method with correction for solving $2$-D Allen-Cahn equation}
\label{alg: DTB for 2d ac}
\begin{algorithmic}[1]
\STATE{Set terminal time $t_1$ and number of steps $K$. Set step size $h=t_1/K$.}
\STATE{Initialize $DTBset=\emptyset$ for solved DTB approximation. Initialize solution $u^0=u(0, x)$}
\FOR{$k=0, \dots, K-1$}
\STATE{Generate samples $z^j$ from some reference distribution $\lambda$ for $j=1,\dots,N_{\theta}$}.
\STATE{Choose random subset $\mathcal{B}_i\subset \mathcal{B}(f, \theta^k), \ i=1, 2$, and construct the metric tensor  $G_i(\theta^k)$ of $\mathcal{B}_i$}
\STATE{Apply Algorithm \ref{alg: J-DTB} to solve $\alpha^1$ from $G_1(\theta^k)\alpha^1 =p^\textrm{temp}$, where $p^\textrm{temp}=P(\theta^k; F[\widehat{u}^k])$}
\STATE{Solve $\alpha^2$ from $G_2(\theta^k)\alpha^2 =P(\theta^k; R^\textrm{temp})$ where the residual $R^\textrm{temp}(z)=F[u^k(z)]-\partial_{\theta}f_{\theta^k}(z)\alpha^1$}
\STATE{Set temporary solution $u^{\textrm{temp}}(z)=u^{k}(z)+h \cdot \frac{\partial}
{\partial\theta}f_{\theta^k}(z) \cdot( \alpha^1+\alpha^2)$}
\STATE{Solve $\alpha^3$ from $G_1(\theta^k)\alpha^3 =p^\textrm{new}$ where the new estimate $p^\textrm{new}=P(\theta^k; \frac{1}{2}(F[u^k]+F[u^{\textrm{temp}}]))$}
\STATE{Solve $\alpha^4$ from $G_2(\theta^k)\alpha^4 =P(\theta^k; R^\textrm{new})$ where $R^\textrm{new} =  \frac{1}{2}(F[u^k]+F[u^{\textrm{temp}}]) - \partial_{\theta}f_{\theta^k}(z)\alpha^3$}
\STATE{Update the parameter $\theta^{k+1}=\theta^k + h\alpha^3+h\alpha^4$; Update $u^{k+1}(z)=u^{k}(z)+h \cdot \frac{\partial}{\partial\theta}f_{\theta^k}(z) \cdot( \alpha^3+\alpha^4)$}
\STATE{Update $\textrm{DTBset}+=\{(\theta^{k}, \alpha^3+\alpha^4)\}$}
\ENDFOR
\STATE{\textbf{Output}: Solution $u^K$ and $\textrm{DTBset}$. }
\end{algorithmic}
\end{algorithm}

\section{Details of the implementation for WGF/WHF}\label{appendix: wgf_details}
For problems such as WGF and WHF, the underlying geometry of the probability manifold evolves with the density $\rho(t, z)$ itself. Therefore, instead of the standard $L^2$ metric, we adopt a time-dependent $\rho$-weighted $L^2$ metric. This helps the proposed geometric framework correctly capture the evolving structure of the problems.

This choice naturally modifies the definitions of the metric tensor $G$ and the projection operator $P$ to incorporate the density $\rho$ as a weight. At each time step, these are defined as:
\begin{align}
    G(\theta, \rho) &= \mathbb{E}_{\mathbf{X}\sim \rho}\left[\frac{\partial}{\partial\theta}f_{\theta}(\mathbf{X})^\top \frac{\partial}{\partial\theta}f_{\theta}(\mathbf{X})\right], \label{eq:metric_expectation} \\
    P(\theta, \rho; g) &= \mathbb{E}_{\mathbf{X}\sim \rho}\left[\frac{\partial}{\partial\theta}f_{\theta}(\mathbf{X})^\top g(\mathbf{X})\right]. \label{eq:proj_expectation}
\end{align}

The function level projection operator can be modified as well:
\begin{align}
    \label{eq: func opt approx rho weighted}
    \mathcal{K}_{\theta,\rho}[g](x) = \frac{\partial}{\partial\theta}f_{\theta}(x)G(\theta, \rho)^{\dagger} P(\theta, \rho; g).
\end{align}

In the Wasserstein gradient flow induced by an energy $\mathcal{F}(\rho)$, the projection of $g(x)=\nabla_X\frac{\delta}{\delta\rho}\mathcal{F}(\rho, x)$ through the operator $P$ can be computed with a single backward pass using automatic differentiation.
\begin{align}
    P(\theta, \rho; \nabla_X\frac{\delta}{\delta\rho}\mathcal{F}(\rho, \cdot))&=\mathbb{E}_{\mathbf{X}\sim \rho}\left[\frac{\partial}{\partial\theta}f_{\theta}(\mathbf{X})^\top \nabla_X\frac{\delta}{\delta\rho}\mathcal{F}(\rho, \mathbf{X})\right]\\
    &=\nabla_{\theta} \mathbb{E}_{\mathbf{X}\sim\rho}\left[f_{\theta}(\mathbf{X})\cdot \nabla_X\frac{\delta}{\delta\rho}\mathcal{F}(\rho, \mathbf{X})\right].
\end{align}
For the energy functional discussed in this paper, the projections are:
\begin{itemize}
    \item For the interaction potential \eqref{eq: interact-F}, we have
    \begin{align}
        P(\theta, \rho; \nabla_X\frac{\delta}{\delta\rho}\mathcal{F}(\rho, \cdot))&=\nabla_{\theta} \mathbb{E}_{\mathbf{X}\sim\rho}\left[f_{\theta}(\mathbf{X})\cdot \int J(|x-y|)\rho(y)dy\right]\nonumber\\
        &=\nabla_{\theta} \mathbb{E}_{\mathbf{X}\sim\rho}\left[f_{\theta}(\mathbf{X})\cdot \mathbb{E}_{\mathbf{Y}\sim\rho} J(|\mathbf{X}-\mathbf{Y}|)\right]
    \end{align}
    \item For the linear potential $ \mathcal{F}(\rho) = \int V(z)\rho(z)dx$, we have
    \begin{align}
        P(\theta, \rho; \nabla_X\frac{\delta}{\delta\rho}\mathcal{F}(\rho, \cdot))&=\nabla_{\theta} \mathbb{E}_{\mathbf{X}\sim\rho}\left[f_{\theta}(\mathbf{X})\cdot \nabla V(\mathbf{X})\right]
    \end{align}
\end{itemize}

In practice, expectations are estimated via Monte Carlo. At each step $k$, samples from the target density $\rho^k$ are generated by applying the current numerical map $T^k$, which is defined by the iteration in \eqref{eq: wgf particle iteration}, to a set of base samples $\{\mathbf{Z}_i\}$ drawn from the reference distribution $\lambda$.

\end{document}